\definecolor{refkey}{rgb}{0,0,1}
\definecolor{labelkey}{rgb}{1,0,0}
\numberwithin{equation}{section}
\newtheorem{theorem}{Theorem}[section]
\newtheorem{proposition}[theorem]{Proposition}
\newtheorem{Definition}[theorem]{Definition}
\newtheorem{Remark}[theorem]{Remark}
\newenvironment{remark}{\begin{Remark}\rm}{\end{Remark}}
\newtheorem{Example}[theorem]{Example}
\newenvironment{example}{\begin{Example}\rm}{\end{Example}}
\newtheorem{RHproblem}[theorem]{RH problem}
\newcommand{\C}{\mathbb{C}}
\newcommand{\R}{\mathbb{R}}
\renewcommand{\Re}{{\rm Re} \,}
\renewcommand{\Im}{{\rm Im} \,}
\renewcommand{\hat}{\widehat}
\def\det{\mathop{\mathrm{det}}\nolimits}
\begin{document}
\title{Bernstein-Markov: a survey}
\author{Thomas Bloom, Norman Levenberg, Federico Piazzon and Franck Wielonsky}
\maketitle 

\begin{abstract} We give a survey of recent results, due mainly to the authors, concerning Bernstein-Markov type inequalities and connections with potential theory.

\end{abstract}

\section{Introduction}  Given a compact set $K\subset \C^n$, a finite measure $\nu$ on $K$ is a Bernstein-Markov measure if there is a strong comparability between $L^{\infty}$ and $L^2$ norms on $K$ for polynomials of a given degree. Precisely, let $\mathcal P_k$ denote the (complex) vector space of (holomorphic) polynomials $p_k(z_1,...,z_n)$ of degree at most $k$ in $\C^n$. We say that $(K,\nu)$ satisfies a {\it Bernstein-Markov property} if for all $p_k\in \mathcal P_k$, 
$$||p_k||_K:=\sup_{z\in K} |p_k(z)|\leq  M_k||p_k||_{L^2(\nu)}  \ \hbox{with} \ \limsup_{k\to \infty} M_k^{1/k} =1.$$
It is often useful to state this as: given $\epsilon >0$, there exists $C=C(\epsilon,K)$ such that for all $k=1,2,...$
\begin{equation}\label{used} ||p_k||_K\leq  C(1+\epsilon)^k||p_k||_{L^2(\nu)}\end{equation}
for all $p_k\in \mathcal P_k$. Equivalently, for any sequence $\{p_j\}$ of nonzero polynomials with $deg(p_j)\to +\infty$,
\begin{equation}\label{bmk} \limsup_{j\to \infty} \bigg(\frac{||p_j||_K}{||p_j||_{L^2(\mu)}}\bigg)^{1/deg (p_j)} \leq 1.\end{equation}
More generally, for $K\subset \C^n$ compact, $Q:K\to \R$ lowersemicontinuous, and $\nu$ a finite measure on $K$, we say that the triple $(K,\nu,Q)$ satisfies a {\it weighted Bernstein-Markov property} if for all $p_k\in \mathcal P_k$, 
$$||e^{-kQ}p_k||_K \leq M_k ||e^{-kQ}p_k||_{L^2(\nu)} \ \hbox{with} \ \limsup_{k\to \infty} M_k^{1/k} =1.$$
Setting $w:=e^{-Q}$ we rewrite the previous inequality as
$$||p_kw^k||_K \leq M_k ||p_kw^k||_{L^2(\nu)}.$$
This is a strong comparability between $L^{\infty}$ and $L^2$ norms on $K$ for {\it weighted} polynomials of a given degree. 
Equivalently, for any sequence $\{p_j\}$ of nonzero polynomials with $deg(p_j)\to +\infty$,
\begin{equation}\label{bmwk}\limsup_{j\to \infty} \bigg(\frac{||w^{deg (p_j)} p_j||_K}{||w^{deg (p_j)}p_j||_{L^2(\mu)}}\bigg)^{1/deg (p_j)} \leq 1.\end{equation}

An elementary example is (normalized) arclength measure $d\nu = \frac{1}{2\pi} d\theta$ on the unit circle $K=\{z\in \C:|z|=1\}$ in the complex plane. Here, the monomials $\{b_j(z)=z^j\}_{j=0}^k$ form an orthonormal basis for $\mathcal P_k$ in $L^2(\nu)$ and if we form the {\it $k-$th Bergman function}
$$B_k^{\nu}(z):=\sum_{j=0}^k |b_j(z)|^2=\sum_{j=0}^k |z^j|^2,$$
for $\nu$, then 
\begin{enumerate}
\item writing $p_k\in \mathcal P_k$ as $p_k(z)=\sum_{j=0}^k c_j b_j(z)$, we have 
$$|p_k(z)|\leq \sqrt {B_k(z)}\cdot ||p_k||_{L^2(\nu)};$$
\item $\max_{z\in K}B_k^{\nu}(z) = k+1$.
\end{enumerate}
Then 1. and 2. show that 
\begin{equation} \label{circleest} ||p_k||_K\leq (k+1)^{1/2} ||p_k||_{L^2(\nu)}.\end{equation}
Indeed, it will follow from results in section 3 that for $K=\{z\in \C:|z|=1\}$, if $Q$ is continuous on $K$ then $(K,\nu,Q)$ satisfies a weighted Bernstein-Markov property. Clearly one can replace $K$ by the closed unit disk $K'=\{z: |z| \leq 1 \}$ and (\ref{circleest}) holds. However, with this new set $K'$, and $d\nu = \frac{1}{2\pi} d\theta$ on the boundary, the function $Q(z) = |z|^2$ gives an example where $(K',\nu,Q)$ does not satisfy a weighted Bernstein-Markov property. This is easily checked by comparing supremum and $L^2(\nu)$ norms of the weighted monomials $e^{-k|z|^2}z^k$ on $K'$.

These Bernstein-Markov properties and their generalizations have many applications in multivariate approximation theory and other areas of mathematics. In this survey, we restrict to the properties themselves and the connections with potential theory due mostly to the authors. Applications may be found in Remarks \ref{ldp} and \ref{ldp2} and the indicated references. Unless otherwise noted, {\sl we will consider (weighted) Bernstein-Markov properties only for measures $\mu$ of finite mass on compact sets $K$}. In this setting, one can replace $L^2$ by $L^p$ for any $0<p <\infty$ in (\ref{bmk}), (\ref{bmwk}); cf., the arguments in Theorem 3.4.3 of \cite{ST}. We will generally restrict to $p=1$ or $p=2$; the latter case allows one connections with other notions (cf., \cite{optimal}). In this survey, we are not concerned in explicitly evaluating the Bernstein-Markov constants $\{M_{k}\}$. Inequalities between different $L^{p}$ norms, $0<p\leq\infty$, and with explicit constants $\{M_{k}\}$ are usually called {\it Nikolskii inequalties} in the literature. They are usually established on specific subsets of $\C$; e.g., subsets of $\R$ or the unit circle. We refer the reader to Chapter VI of \cite{ST} for more information and further references.

When possible, we give results valid in $\C^n$ for $n\geq 1$. Much of the univariate theory can be found in \cite{ST} and \cite{SaTo}. The next section provides the requisite background in potential theory in $\C$ and pluripotential theory in $\C^n, \ n>1$. Section 3 gives some sufficient conditions on a measure $\mu$ to satisfy a Bernstein-Markov property as in (\ref{bmk}) and (\ref{bmwk}). In the ensuing two sections we work in $\C$: rational Bernstein-Markov properties are the topic of section 4 while section 5 introduces new extremal-like functions and Bernstein-Walsh estimates in the univariate setting. Section 6 contains some estimates related to Riesz potentials in $\R^d$. The final three sections include a discussion of Bernstein-Markov properties on algebraic subvarieties of $\C^n$; an example of V. Totik; a recent result of T. Bloom; and some open questions.

\section{Potential theory and Bernstein-Markov properties}

A key tool to the connection between Bernstein-Markov properties and potential theory (in $\C$) or pluripotential theory (in $\C^n, \ n>1$) is the {\it extremal function} 
\begin{equation}\label{veekpoly}V_K(z)= \sup \{{1\over deg (p)}\log |p(z)|: p \in \bigcup_k \mathcal P_k, \ ||p||_{K}:=\sup_K |p|\leq 1 \}\end{equation}
associated to a compact set $K\subset \C^n$. More generally, for $Q:K\to \R$ lowersemicontinuous, we define the {\it weighted extremal function}
\begin{equation}\label{veekqpoly}V_{K,Q}(z)= \sup \{{1\over deg (p)}\log |p(z)|: p \in \bigcup_k \mathcal P_k, \ ||w^{deg (p)}p||_{K}:=\sup_K |w^{deg (p)}p|\leq 1 \}\end{equation}
where $w=e^{-Q}$. If $V_{K,Q}$ is upper semicontinuous (usc), then for $n=1$ it is a {\it subharmonic} (shm) function and for $n>1$ it is {\it plurisubharmonic} (psh). A real-valued function $u$ on a domain $D\subset \C^n$ is psh in $D$ if it is usc and $u|_{D\cap L}$ is shm on components of $D\cap L$ for each complex line $L\subset \C^n$. We say a set $E\subset \C^n$ is {\it pluripolar} if there exists $u$ psh in a neighborhood of $E$ with $u\not \equiv -\infty$ and $E\subset \{u= -\infty\}$. In one variable, where psh $=$ shm, we say $E$ is a {\it polar} set. If $f:D\subset \C^n\to \C$ is holomorphic, then $u(z)=\log |f(z)|$ is psh in $D$. This observation yields standard examples of pluripolar sets: analytic varieties. For general compact sets $K\subset \C^n$, the usc regularization
$$V^*_{K
}(z):=\limsup_{\zeta \to z}V_{K}(\zeta)$$
is either identically $+\infty$ -- which occurs if $K$ is pluripolar -- or $V_K^*$ is psh. For $K=\{z\in \C^n: |z| =1\}$ and $K'=\{z\in \C^n: |z| \leq 1\}$, 
\begin{equation}\label{veek}V_K(z)=V_{K'}(z)=V^*_K(z)=V^*_{K'}(z)=\log^+|z|:=\max[0,\log |z|].\end{equation}
For the weighted extremal functions, one {\it assumes} that the set $\{z\in K: w(z) > 0\}$
is not pluripolar (and $Q$ is called {\it admissible}); thus a priori $K$ itself is not pluripolar and $V_{K,Q}^*$ is always psh. 

If $n=1$, the Laplacian of $V_K^*$ (if $K$ is not polar) and $V_{K,Q}^*$ have important interpretations. Let $K\subset \C$ be compact and let ${\mathcal M}(K)$ denote the convex set of probability  measures on $K$. One considers the  {\it energy minimization problem}: minimize the {\it logarithmic energy} 
$$I(\mu):=\int_K \int_K \log {1\over |z-\zeta|}d\mu(z) d\mu(\zeta)=\int_K U_{\mu}(z)d\mu(z)$$
over $\mu \in {\mathcal M}(K)$. Here 
$$U_{\mu}(z):=\int_K \log {1\over |z-\zeta|}d\mu(\zeta)$$ is called the {\it logarithmic potential} of $\mu$. Either $\inf_{\mu \in {\mathcal M}(K)} I(\mu)=:I(\mu_{K})<+\infty$ for a unique $\mu_K\in {\mathcal M}(K)$ or else $I(\mu)=+\infty$ for all $\mu \in {\mathcal M}(K)$. In the former case $V_K^*=-U_{\mu_K}+I(\mu_K)$ on $\C$ and $\mu_K = \frac{1}{2\pi} \Delta V_K^*$. In the latter case $K$ is polar. We call 
\begin{equation} \label{cap} C(K):=\exp \bigl[-\inf_{\mu \in {\mathcal M}(K)} I(\mu)\bigr]\end{equation}
the {\it logarithmic capacity} of $K$. One can discretize this problem. For each $k=1,2,...$, given $k+1$ points $z_0,...,z_k$, consider the discrete measure $\mu_k:=\frac{1}{k} \sum_{j=0}^k \delta (z_j)$ where $\delta(z_j)$ denotes the unit measure at the point $z_j$. The logarithmic energy of $\mu_k$ omitting ``diagonal terms'' is 
$$ \frac{2}{k(k+1)}\sum_{j<l} \log \frac{1}{|z_j-z_l|}.$$
Exponentiating the negative of this quantity leads to the maximization problem
\begin{equation}\label{deltak} \delta_k(K):=\max_{z_0,...,z_k\in K}\prod_{j<l}|z_j-z_l|^{1/{k +1\choose 2}},\end{equation}
the $k-th$ order diameter of $K$. Note 
\begin{equation}\label{deltak2} VDM_k(z_0,...,z_k)=\det [z_i^{j}]_{i,j=0,1,...,k}=\prod_{j<l}(z_j-z_l)\end{equation}
$$=\det\left[
\begin{array}{ccccc}
 1 &z_0 &\ldots  &z_0^k\\
  \vdots  & \vdots & \ddots  & \vdots \\
1 &z_k &\ldots  &z_k^k
\end{array}
\right] $$
is the classical Vandermonde determinant where the basis monomials $1,z,...,z^k$ for $\mathcal P_k$ are evaluated at $z_0,...,z_k$. Optimizing points $\lambda_0,...,\lambda_k\in K$ for (\ref{deltak}) are called {\it Fekete points of order $k$}. It is straightforward that the sequence $\{\delta_k(K)\}$ decreases with $k$; the quantity 
$$\delta(K):=\lim_{k\to \infty} \delta_k(K)$$ is called the {\it transfinite diameter} of $K$ and coincides with $C(K)$ in (\ref{cap}). For proofs of the statements in this paragraph, see \cite{R}.
 
Noting that {\sl $z_j\to VDM_k(z_0,...,z_k)$ is a polynomial of degree $k$ in $z_j$}, by repeatedly applying the Bernstein-Markov property we can recover the transfinite diameter $\delta(K)$ in an $L^2-$fashion with a Bernstein-Markov measure.

\begin{proposition} \label{znunwtd} Let $K\subset \C$ be compact and let $(K,\nu)$ satisfy a Bernstein-Markov property. Then
$$\lim_{k\to \infty} Z_k^{1/k^2} = \delta(K)$$ where
$$Z_k=Z_k(K,\nu):= \int_{K^{k+1}} |VDM_k(z_0,...,z_k)|^2d\nu(z_0) \cdots d\nu(z_k).$$
\end{proposition}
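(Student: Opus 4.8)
The plan is to prove the two inequalities $\limsup_{k\to\infty}Z_k^{1/k^2}\le\delta(K)$ and $\liminf_{k\to\infty}Z_k^{1/k^2}\ge\delta(K)$ separately; the first is elementary and the second is where the Bernstein--Markov hypothesis enters.

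For the upper bound I would use only the definition \eqref{deltak} of the $k$-th order diameter: for every $(z_0,\dots,z_k)\in K^{k+1}$ one has $|VDM_k(z_0,\dots,z_k)|\le\delta_k(K)^{\binom{k+1}{2}}$, so integrating $|VDM_k|^2$ against $d\nu(z_0)\cdots d\nu(z_k)$ gives $Z_k\le\delta_k(K)^{k(k+1)}\,\nu(K)^{k+1}$. Taking $k^2$-th roots and letting $k\to\infty$, using $(k+1)/k\to1$, $(k+1)/k^2\to0$, $\nu(K)<\infty$, and $\delta_k(K)\to\delta(K)<\infty$, yields $\limsup_k Z_k^{1/k^2}\le\delta(K)$.

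For the lower bound, let $\lambda_0,\dots,\lambda_k\in K$ be Fekete points of order $k$, so that $|VDM_k(\lambda_0,\dots,\lambda_k)|=\delta_k(K)^{\binom{k+1}{2}}$. I would show
$$ \delta_k(K)^{k(k+1)}\;=\;\sup_{K^{k+1}}|VDM_k|^2\;\le\;M_k^{2(k+1)}\,Z_k $$
by applying the Bernstein--Markov inequality one variable at a time. The crucial structural point is that $VDM_k$, being a linear combination of monomials $z_0^{j_0}\cdots z_k^{j_k}$ with each $j_i\le k$, is a polynomial of degree $\le k$ in each variable separately, and that integrating $|VDM_k|^2$ over the first $s$ variables against $\nu$ produces a function $H_s(z_s,\dots,z_k)$ which, after expanding in an $L^2(\nu)$-orthonormal basis $\{b_0,\dots,b_k\}$ of $\mathcal P_k$ in the variable $z_{s-1}$ and applying Parseval, is a finite sum $\sum_\alpha|a^{(s)}_\alpha|^2$ of squares of polynomials, each again of degree $\le k$ in each of its variables $z_s,\dots,z_k$. (Here one uses that the Bernstein--Markov property forces $\nu$ not to be supported on finitely many points, so that such an orthonormal basis exists for every $k$.) This ``sum of squares of elements of $\mathcal P_k$ in the next free variable'' form is exactly what permits the Bernstein--Markov inequality to be reapplied: for fixed $z_{s+1},\dots,z_k$,
$$ \sup_{z_s\in K}\sum_\alpha|a^{(s)}_\alpha(z_s,\dots,z_k)|^2\;\le\;\sum_\alpha M_k^2\int_K|a^{(s)}_\alpha(z_s,\dots,z_k)|^2\,d\nu(z_s)\;=\;M_k^2\,H_{s+1}(z_{s+1},\dots,z_k), $$
and taking the supremum over the remaining variables gives $\sup_{K^{k+1-s}}H_s\le M_k^2\sup_{K^{k-s}}H_{s+1}$. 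Iterating from $s=0$ (where $H_0=|VDM_k|^2$) to $s=k+1$ (where $H_{k+1}=Z_k$) yields the displayed bound.

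To conclude I would use the formulation \eqref{used}: fix $\epsilon>0$ and take $M_k=C(1+\epsilon)^k$; then $Z_k^{1/k^2}\ge\big(C(1+\epsilon)^k\big)^{-2(k+1)/k^2}\delta_k(K)^{(k+1)/k}\to(1+\epsilon)^{-2}\delta(K)$, so $\liminf_k Z_k^{1/k^2}\ge(1+\epsilon)^{-2}\delta(K)$ for every $\epsilon>0$, hence $\liminf_k Z_k^{1/k^2}\ge\delta(K)$. Together with the upper bound this gives $\lim_k Z_k^{1/k^2}=\delta(K)$. (If $K$ is finite or polar, $\delta(K)=0$ and the claim reduces to $Z_k^{1/k^2}\to0$, which is immediate: either $Z_k=0$ for large $k$, or $\delta_k(K)\to0$ already forces the upper bound to $0$.) The only nonroutine point is the bookkeeping that the partial integrations preserve the ``sum of squares of degree-$\le k$ polynomials'' form, since that is precisely what legitimizes the iterated application of the one-variable Bernstein--Markov inequality.
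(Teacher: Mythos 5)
Your argument is correct, and the upper bound is identical to the paper's. For the lower bound you arrive at the same key inequality $\delta_k(K)^{k(k+1)}\le M_k^{2(k+1)}Z_k$, but by a genuinely different mechanism: you integrate out the variables from the left and retain control of $\sup_{K^{k+1-s}}H_s$ by showing, via Parseval in an $L^2(\nu)$-orthonormal basis of $\mathcal P_k$, that each partial integral is a sum of squares of polynomials of degree at most $k$ in each remaining variable, to which the Bernstein--Markov inequality can be applied termwise. The paper avoids this bookkeeping entirely by exploiting the Fekete points: with $p_0(z)=VDM_k(z,\lambda_1,\dots,\lambda_k)$, extremality of the $\lambda_j$ gives $\|p_0\|_K^2=\delta_k(K)^{k(k+1)}$ directly, and at each subsequent step one applies Bernstein--Markov to the one-variable polynomial $w\mapsto VDM_k(z_0,\dots,z_{s-1},w,\lambda_{s+1},\dots,\lambda_k)$ after the trivial observation $|p_1(\lambda_s)|\le\|p_1\|_K$; no orthonormal basis or sum-of-squares structure is needed, only the fact that the Vandermonde is a degree-$k$ polynomial in each slot separately. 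Your route is heavier but yields the same bound, and your handling of the side issues is sound: the Bernstein--Markov property does force $1,z,\dots,z^k$ to be linearly independent in $L^2(\nu)$ whenever $K$ is infinite (a nonzero $p\in\mathcal P_k$ with $\|p\|_{L^2(\nu)}=0$ would violate the inequality), and the finite or polar cases are degenerate exactly as you say.
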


\begin{proof} Since $Z_k\leq \delta_k(K)^{k(k+1)}\cdot \nu(K)^{k+1}$, clearly
$$\limsup_{k\to \infty} Z_k^{1/k^2} \leq \delta(K).$$
Choosing $\lambda_0,...,\lambda_k\in K$ to be Fekete points of order $k$, 
$$p_0(z):=VDM_k(z,\lambda_1,...,\lambda_k)$$
is a polynomial of degree $k$ with 
$$||p_0||_K^2= |VDM_k(\lambda_0,...,\lambda_k)|^2=\delta_k(K)^{k(k+1)}.$$
Thus
$$\delta_k(K)^{k(k+1)}\leq M_k^2 \cdot ||p_0||_{L^2(\nu)}^2=M_k^2 \cdot \int_K |VDM_k(z,\lambda_1,...,\lambda_k)|^2d\nu(z).$$
Fixing $z\in K$ and applying the Bernstein-Markov property to $p_1(w):=VDM_k(z,w,\lambda_2,...,\lambda_k)$ (note $|p_1(\lambda_1)|\leq ||p_1||_K$) and continuing, we obtain
$$\delta_k(K)^{k(k+1)}\leq M_k^{2(k+1)}\cdot Z_k$$
so that
$$\delta(K)\leq \liminf_{k\to \infty} Z_k^{1/k^2} .$$
\end{proof}

In the weighted setting, $K\subset \C$ should be closed but may be unbounded. We require $w=e^{-Q}\geq 0$ to be usc on $K$ with
$\{z\in K:w(z)>0\}$ nonpolar; if $K$ is unbounded, we must impose a certain growth property on $w$. We will, however, restrict to the compact case. We minimize the weighted energy 	
\begin{equation}\label{wten} I^Q(\tau):=\int_K \int_K \log \frac {1}{|z-t|w(z)w(t)}d\tau(t)d\tau(z)=I(\tau) +2\int_KQd\tau\end{equation} 
over all $\tau \in \mathcal M(K)$. There is always a unique minimizer which we denote by $\mu_{K,Q}$. Then 
$$\mu_{K,Q}=\frac{1}{2\pi}\Delta V_{K,Q}^*.$$
 The associated discretization leads to the {\it weighted transfinite diameter} of $K$ with respect to $w$:
	\begin{equation} \label{wtdvdm} \delta^Q(K):=\lim_{k\to \infty} \bigl[\max_{\lambda_i\in K}|VDM_k(\lambda_0,...,\lambda_k)|e^{-kQ(\lambda_0)}\cdots e^{-kQ(\lambda_k)}\bigr] ^{1/{k+1\choose 2}}= e^{-I^Q(\mu_{K,Q})}.\end{equation} 
	The proof that the limit exists and equals $e^{-I^Q(\mu_{K,Q})}$ is exactly as in the unweighted case. Maximizing points $\lambda_0,...,\lambda_k\in K$ at the $k-$th stage are called weighted Fekete points of order $k$. For future use we introduce notation for the weighted Vandermonde determinants:
	$$VDM_k^Q(\lambda_0,...,\lambda_k):=VDM_k(\lambda_0,...,\lambda_k)e^{-kQ(\lambda_0)}\cdots e^{-kQ(\lambda_k)}$$
	$$=VDM_k(\lambda_0,...,\lambda_k)w(\lambda_0)^k\cdots w(\lambda_k)^k.$$ 
Noting that $z_j\to VDM_k^Q(z_0,...,z_k)$ is a {\it weighted} polynomial of degree $k$ in $z_j$; i.e., a function of the form $w(z_j)^k\cdot p_k(z_j)$ where $p_k\in \mathcal P_k$, it is easy to see that the analogue of Proposition  \ref{znunwtd} holds in the weighted case; i.e., if  $(K,\nu,Q)$ satisfy a Bernstein-Markov property and 
\begin{equation} \label{zkq}Z_k=Z_k(K,\nu,Q):= \int_{K^{k+1}} |VDM_k^Q(z_0,...,z_k)|^2d\nu(z_0) \cdots d\nu(z_k),\end{equation} 
then $\lim_{k\to \infty} Z_k^{1/k^2} = \delta^Q(K)$.

\begin{remark}\label{ldp} Proposition \ref{znunwtd} and its generalizations are key tools in proving certain probabilistic results involving equidistribution of discrete measures associated to random arrays of points in a compact set $K$ to an equilibrium measure. Let $(K,\nu,Q)$ satisfy a Bernstein-Markov property. With
$Z_k$ as in (\ref{zkq}), we define a probability measure $Prob_k$ on $K^{k+1}$: for a Borel set $A\subset K^{k+1}$,
$$Prob_k(A):=\frac{1}{Z_k}\cdot \int_A  |VDM_k^Q(z_0,...,z_{k
})|^2 \cdot d\nu(z_0) \cdots d\nu(z_{k
}).$$
We get an induced probability measure ${\bf P}$ on the infinite product space of arrays $\chi:=\{X=\{z_j^{(k)}\}_{k=1,2,...; \ j=0,1,...,k}: z_j^{(k)} \in K\}$: 
	$$(\chi,{\bf P}):=\prod_{k=1}^{\infty}(K^{k},Prob_k).$$
Then for ${\bf P}$-a.e. array $X=\{z_j^{(k)}\}\in \chi$, 
\begin{equation}\label{nuk}\nu_k :=\frac{1}{k}\sum_{j=0}^{k}\delta (z_j^{(k)})\to \mu_{K,Q} \ \hbox{weak-*}.\end{equation}
Much more is true; cf., Remark \ref{ldp2}. We refer the reader to \cite{PELD}, \cite{VELD}, \cite{LPLD} and \cite{BLTW} for extensions and generalizations of this argument.
\end{remark}
		
 In $\C^n$ for $n>1$ we have a generalization of (weighted) Vandermonde and (weighted) transfinite diameter. For $k=1,2,...$ we write $N_k:=$ dim${\mathcal P}_k={n+k\choose k}$ and let $\{e_1,...,e_{N_k}\}$ be the standard monomial basis for ${\mathcal P}_k$. Given $K\subset \C^n$ and an admissible weight function $Q$ on $K$, the weighted $k-$th order diameter of $K$ with $Q$ is
$$\delta^{Q,k}(K):= \bigl(\max_{{\bf x}=(x_1,...,x_{N_k})\in K^{N_k}} |VDM_k({\bf x})| e^{-kQ(x_1)} \cdots e^{-kQ(x_{N_k})}\bigr)^{\frac{n+1}{nkN_k}}$$
where $VDM_k({\bf x}):=\det [e_i(x_j) ]_{i,j=1,...,N_k}$ (compare (\ref{deltak2})). Here $\frac{nkN_k}{n+1} = \sum_{j=1}^{N_k}deg(e_j)$. That the limit $$\lim_{k\to \infty} \delta^{Q,k}(K)= \delta^Q (K)$$
exists is a nontrivial result due to Zaharjuta \cite{Z} in the unweighted case (cf., \cite{BBnew} or \cite{BLtd} for the weighted case). The limit is called the weighted transfinite diameter of $K$ with respect to $Q$. For $Q\equiv 0$ we call $\delta (K):=\delta^0 (K)$ the transfinite diameter of $K$. We have the following weighted generalization of Proposition \ref{znunwtd} (with a similar proof), valid in $\C^n$ for $n\geq 1$.

\begin{proposition}
\label{weightedtd}
Let $K\subset \C^n$ be a compact set and let $Q$ be an admissible weight function on $K$. If $\nu$ is a measure on $K$ with $(K,\nu,Q)$ satisfying a weighted Bernstein-Markov property, then 
\begin{equation*}
%\label{zeen}
\lim_{k\to \infty} Z_k^{\frac{1}{2kN_k}
} =   \delta^Q(K)
\end{equation*}
where
$$Z_k=Z_k(K,\nu,Q):= \int_{K^{N_k}}  |VDM_k({\bf x})|^2 e^{-2kQ(x_1)} \cdots e^{-2kQ(x_{N_k})}d\nu({\bf x}).$$\end{proposition}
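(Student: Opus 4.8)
The plan is to mimic the proof of Proposition~\ref{znunwtd} almost verbatim, replacing the univariate Vandermonde structure with the multivariate one and repeatedly invoking the weighted Bernstein-Markov property to pass between $L^\infty$ and $L^2$ norms. The two inequalities are proved separately. For the upper bound, note that for any choice of points ${\bf x}=(x_1,\dots,x_{N_k})\in K^{N_k}$ we have
$$|VDM_k({\bf x})|^2 e^{-2kQ(x_1)}\cdots e^{-2kQ(x_{N_k})}\le \bigl(\max_{{\bf x}\in K^{N_k}}|VDM_k({\bf x})|e^{-kQ(x_1)}\cdots e^{-kQ(x_{N_k})}\bigr)^2 = \bigl(\delta^{Q,k}(K)\bigr)^{\frac{2nkN_k}{n+1}},$$
so integrating over $K^{N_k}$ gives $Z_k\le \bigl(\delta^{Q,k}(K)\bigr)^{\frac{2nkN_k}{n+1}}\nu(K)^{N_k}$. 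Raising to the power $\frac{1}{2kN_k}$ and letting $k\to\infty$, the factor $\nu(K)^{1/(2kN_k)}\to 1$ (finite mass), and $\bigl(\delta^{Q,k}(K)\bigr)^{n/(n+1)}\to \delta^Q(K)$ by Zaharjuta's theorem as quoted in the excerpt; hence $\limsup_{k\to\infty} Z_k^{\frac{1}{2kN_k}}\le \delta^Q(K)$.

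For the lower bound, fix $k$ and choose weighted Fekete points $\lambda_1,\dots,\lambda_{N_k}\in K$ realizing the maximum defining $\delta^{Q,k}(K)$. Consider $P_1(x):=VDM_k(x,\lambda_2,\dots,\lambda_{N_k})$, which, as noted in the excerpt, is a weighted polynomial of degree $k$ in the first slot; that is, $x\mapsto P_1(x)e^{-kQ(x)}$ times the fixed scalar $e^{-kQ(\lambda_2)}\cdots e^{-kQ(\lambda_{N_k})}$ is $w(x)^k$ times an element of $\mathcal P_k$. Its weighted sup-norm over $K$ equals the Fekete maximum $M:=\max_{{\bf x}}|VDM_k({\bf x})|e^{-kQ(x_1)}\cdots e^{-kQ(x_{N_k})}$, since the Fekete configuration is optimal. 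Applying the weighted Bernstein-Markov property to this weighted polynomial in the variable $x_1$ and then Fubini, then repeating the argument successively in the slots $x_2,\dots,x_{N_k}$ (at each stage freezing the already-integrated variables and using that the optimal value over $K$ of $|\,\cdot\,|$ at the remaining free node is dominated by the weighted sup-norm), we obtain, after $N_k$ applications,
$$M^2 \le M_k^{2N_k}\cdot Z_k.$$
Taking $\frac{1}{2kN_k}$-th powers, $M^{1/(kN_k)} = \bigl(\delta^{Q,k}(K)\bigr)^{n/(n+1)}\to\delta^Q(K)$ while $M_k^{1/k}\to 1$ by the Bernstein-Markov hypothesis, so $\delta^Q(K)\le\liminf_{k\to\infty} Z_k^{\frac{1}{2kN_k}}$, and combining the two bounds finishes the proof.

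The step I expect to require the most care is the repeated-application argument in the lower bound: one must be sure that at the $j$-th stage the function being estimated is genuinely a weighted polynomial of degree $\le k$ in the variable $x_j$ with the other variables held fixed, so that the single hypothesis ``$(K,\nu,Q)$ satisfies a weighted Bernstein-Markov property'' (with a single sequence $\{M_k\}$, $\limsup M_k^{1/k}=1$) applies uniformly, and that the bound $|VDM_k(\dots)|e^{-kQ(\cdots)}\le M$ at the next free Fekete node really does follow from optimality of the Fekete configuration rather than requiring a fresh extremal argument. This is exactly the multivariate analogue of the chain of estimates $p_0\to p_1\to\cdots$ in the proof of Proposition~\ref{znunwtd}, and the bookkeeping of the exponent $\frac{nkN_k}{n+1}=\sum_{j=1}^{N_k}\deg(e_j)$ (which is why the normalization $\frac{1}{2kN_k}$ rather than $\frac{1}{k^2}$ appears) must be tracked consistently; everything else is routine.
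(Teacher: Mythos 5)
Your overall strategy is exactly the intended one --- the paper gives no separate proof of Proposition~\ref{weightedtd}, saying only that it is ``similar'' to Proposition~\ref{znunwtd}, and your two-sided argument (upper bound by the Fekete maximum, lower bound by $N_k$ successive applications of the weighted Bernstein--Markov inequality starting from a weighted Fekete configuration, with the observation that the value at the frozen node is dominated by the sup over that slot) is precisely that argument transplanted to $\C^n$. The iterated lower-bound step that you flag as delicate is in fact fine for the reason you give.

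There is, however, one genuine error in your final limit identifications, and it is worth isolating because it exposes a normalization issue. Your bookkeeping correctly yields
$$Z_k^{\frac{1}{2kN_k}} \le \bigl(\delta^{Q,k}(K)\bigr)^{\frac{n}{n+1}}\,\nu(K)^{\frac{1}{2k}}
\quad\hbox{and}\quad
\bigl(\delta^{Q,k}(K)\bigr)^{\frac{n}{n+1}} \le M_k^{1/k}\, Z_k^{\frac{1}{2kN_k}},$$
since $\max_{\bf x}|VDM_k^Q({\bf x})| = \bigl(\delta^{Q,k}(K)\bigr)^{\frac{nkN_k}{n+1}}$ by the definition of $\delta^{Q,k}$. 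But you then assert twice that $\bigl(\delta^{Q,k}(K)\bigr)^{n/(n+1)}\to\delta^Q(K)$. Zaharjuta's theorem gives $\delta^{Q,k}(K)\to\delta^Q(K)$, so the quantity you have actually converges to $\bigl(\delta^Q(K)\bigr)^{n/(n+1)}$, not to $\delta^Q(K)$. In other words, your (correct) computation proves $\lim_k Z_k^{\frac{1}{2kN_k}} = \bigl(\delta^Q(K)\bigr)^{\frac{n}{n+1}}$; to land on $\delta^Q(K)$ itself the exponent must be $\frac{n+1}{2nkN_k} = \frac{1}{2\sum_{j}\deg(e_j)}$, which for $n=1$ is $\frac{1}{k(k+1)}\sim\frac{1}{k^2}$, consistent with Proposition~\ref{znunwtd} (note that the exponent $\frac{1}{2kN_k}$ printed in the statement reduces to $\frac{1}{2k(k+1)}$ when $n=1$, already off by a factor of $2$ from $1/k^2$). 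So the statement as printed carries a normalization slip, and rather than detecting it your write-up papers over it with a false convergence claim. With the exponent corrected, every step of your argument goes through and the proof is complete.
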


In this higher dimensional setting, there is no natural notion of energy as the associated ``potential theory'' is nonlinear -- the replacement for the (linear) Laplacian 
$$\Delta = \frac{\partial^2 u}{\partial x^2}+ \frac{\partial^2 u}{\partial y^2} =4\frac{\partial^2 u}{\partial z \partial \bar z}$$ 
in $\C$ is the (nonlinear) complex Monge-Amp\`ere operator $(dd^c \cdot)^n$ in $\C^n$. For a $C^2-$real-valued function $u=u(z_1,...,z_n)$, 
$$(dd^c u)^n =c_n \det [\frac{\partial^2 u}{\partial z_j \partial \bar z_k}]_{j,k=1,...,n}\ dV$$
where $c_n$ is a dimensional constant and $dV$ is Lebesgue measure. For locally bounded psh functions $u$, the complex Monge-Amp\`ere operator $(dd^c u)^n$ is well-defined as a positive measure (cf., \cite{K}). With $V_{K,Q}$ in (\ref{veekqpoly}), the replacement for the weighted energy minimizing measure in this situation is given by
$$\mu_{K,Q}:= (dd^c V_{K,Q}^*)^n.$$
The constant $c_n$ is chosen to make this a probability measure.

\section{Mass density conditions for Bernstein-Markov}

Given a compact set $K\subset \C^n$, which measures $\mu$ with support in $K$ satisfy a Bernstein-Markov property? Is there a ``checkable'' sufficient condition on $\mu$ so that $(K,\mu)$ satisfy a Bernstein-Markov property? In the univariate case, much is known but many questions remain. One would expect that if $(K,\mu)$ satisfy a Bernstein-Markov property, then $\mu$ should be sufficiently dense on $K$, or at least on its support, supp$(\mu)$. We call a Borel subset $E$ of $K$ a {\it carrier} of $\mu$ if $\mu(E)=\mu(K)$. One can define extremal functions $V_E$ of Borel sets $E$ generalizing (\ref{veekpoly}) in the following way. Note that for a polynomial $p$, the function 
$u(z)=\frac{1}{deg (p)}\log |p(z)|$ is psh in $\C^n$ and of logarithmic growth at infinity. We let $L(\C^n)$ denote the set of all $u$ psh in $\C^n$ with 
$$u(z)\leq \log |z|+0(1), \ |z|\to \infty.$$
Then for $E\subset \C^n$, define
\begin{equation}\label{vkno}V_E(z):=\sup \{u(z): \ u\in L(\C^n), \ u\leq 0 \ \hbox{on} \ E\}.\end{equation}
If $K$ is compact, this agrees with (\ref{veekpoly}) (cf., \cite{K}); indeed, if $K$ is compact and $Q$ is an admissible weight on $K$ then $V_{K,Q}$ in (\ref{veekqpoly}) agrees with
 \begin{equation}\label{vkq} V_{K,Q}(z)=\sup \{u(z): \ u\in L(\C^n), \ u\leq Q \ \hbox{on} \ K\}\end{equation}
(cf., Appendix B in \cite{SaTo}).

 Following Ullman's work on real intervals (cf., p. 102 of \cite{ST} or \cite{UW}) we call $\mu$ a {\it determining measure} for $K$ if for all carriers $E$ of $\mu$, 
$V_E^*=V_K^*$. To avoid trivialities, one should assume $K$ is not pluripolar. If $K$ is a {\it regular} compact set, which means that $V_K=V_K^*$; i.e., $V_K$ is continuous, it is known that {\it if $\mu$ is a determining measure for $K$ then $(K,\mu)$ satisfy a Bernstein-Markov property} (cf., \cite{Biumj}). The converse is far from true: {\it any} compact set admits a Bernstein-Markov measure with a countable (and hence pluripolar) carrier. The following is from \cite{PELD}. 

\begin{proposition} \label{universal} Let $K\subset \C^n$ be an arbitrary compact set. Then there exists a finite measure $\nu$ with support in $K$ which is carried by a countable set such that $(K,\nu)$ satisfies a Bernstein-Markov property.
 \end{proposition}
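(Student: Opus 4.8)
The plan is to build $\nu$ as a countable sum of point masses concentrated at a dense sequence of ``near-extremal'' points, with weights decaying fast enough that the total mass is finite, and then verify the Bernstein-Markov inequality \eqref{used} by a dimension-counting argument. First I would fix, for each degree $k$, a finite set $F_k = \{a_1^{(k)},\dots,a_{N_k}^{(k)}\}\subset K$ (with $N_k = \dim\mathcal P_k$) that is ``norming'' for $\mathcal P_k$ in a controlled way: concretely, choose $F_k$ so that for every $p\in\mathcal P_k$ one has $\|p\|_K \le C_k \max_{a\in F_k}|p(a)|$ with $C_k$ growing subexponentially, e.g.\ $C_k = N_k$ or even $C_k=2$. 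Such sets exist: Fekete points of order $k$ give a norming set with constant $N_k$ (since writing $p$ in Lagrange-type form against the Fekete points and using that each Lagrange numerator is a Vandermonde-type determinant bounded in modulus by the maximal one gives $\|p\|_K \le N_k \max_{a\in F_k}|p(a)|$), and one may even do better, but subexponential growth of $C_k$ is all that is needed.

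Next I would set $\nu := \sum_{k=1}^\infty c_k \sum_{a\in F_k}\delta_a$ for a sequence of positive weights $c_k\downarrow 0$ to be chosen, so that $\nu$ has finite mass ($\sum_k c_k N_k < \infty$, e.g.\ $c_k = 2^{-k}/N_k$) and is carried by the countable set $\bigcup_k F_k$, whose closure lies in $K$ (one can throw in the limit points if one wants the support in $K$; this is harmless). Then for any $p_k\in\mathcal P_k$ and any $m\ge k$,
$$
\|p_k\|_{L^2(\nu)}^2 \ge c_m \sum_{a\in F_m} |p_k(a)|^2 \ge c_m \max_{a\in F_m}|p_k(a)|^2 \ge \frac{c_m}{C_m^2}\,\|p_k\|_K^2,
$$
using that $F_m$ is norming for $\mathcal P_m \supset \mathcal P_k$. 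Taking $m=k$ gives $\|p_k\|_K \le C_k c_k^{-1/2}\,\|p_k\|_{L^2(\nu)}$, so $M_k := C_k c_k^{-1/2}$ works, and with $c_k = 2^{-k}/N_k$ and $C_k = N_k$ we get $M_k = N_k^{3/2} 2^{k/2}$, whence $\limsup_k M_k^{1/k} = \sqrt 2$ — which is not $1$.

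So the final step, and the real point of the construction, is to kill the spurious factor $\sqrt 2$: instead of a single norming set per degree, I would use a rapidly-growing sequence of degrees $k_1 < k_2 < \cdots$ and, to handle an arbitrary $p\in\mathcal P_k$, invoke a norming set $F_m$ with $m = k_j$ chosen much larger than $k$. Since $\|p_k\|_K \le C_m c_m^{-1/2}\|p_k\|_{L^2(\nu)}$ for \emph{every} $m\ge k$, we are free to optimize over $m$: given $\epsilon>0$ and $k$, pick the least $k_j \ge k$; if the $k_j$ grow fast enough (e.g.\ $k_{j+1} \ge j\cdot k_j$) and $c_k$, $C_k$ grow only subexponentially, then $C_{k_j} c_{k_j}^{-1/2} \le C(\epsilon)(1+\epsilon)^k$ because the exponent is being measured against the larger quantity $k_j$, not $k$. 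That is, one arranges $\nu := \sum_{j=1}^\infty c_{k_j}\sum_{a\in F_{k_j}}\delta_a$ so that $\log(C_{k_j}c_{k_j}^{-1/2}) = o(k_j)$ and, for $k$ in the range $(k_{j-1},k_j]$, the bound $\|p_k\|_K \le C_{k_j}c_{k_j}^{-1/2}\|p_k\|_{L^2(\nu)}$ yields $\|p_k\|_K \le C(\epsilon)(1+\epsilon)^k \|p_k\|_{L^2(\nu)}$, which is exactly \eqref{used}. The carrier $\bigcup_j F_{k_j}$ is still countable, so $\nu$ is of the required type.

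The main obstacle is precisely this last balancing act: one needs the norming constants $C_k$ to be genuinely subexponential (polynomial is more than enough, and Fekete points deliver $C_k = N_k$), and one needs the density requirement to be met not by using sets at \emph{every} degree but at a sparse enough subsequence of degrees so that the mass-weights $c_{k_j}$ can be taken large (subexponentially small) relative to $k_j$ while the interpolation at intermediate degrees $k\le k_j$ still goes through. Once the bookkeeping on the two sequences $(k_j)$ and $(c_{k_j})$ is set up correctly, the estimate itself is the three-line inequality above. Regularity of $K$ plays no role here, which is the surprising content of the statement.
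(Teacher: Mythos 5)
Your core construction is the paper's: Fekete points at each degree, the Lagrange interpolation bound $\|p\|_K\le N_k\max_{a\in F_k}|p(a)|$ (each Lagrange numerator being a Vandermonde dominated by the maximal one), and a countable sum of weighted discrete measures. But you have made it harder than it needs to be, and the patch you bolt on at the end is not quite right as stated. The detour through $c_k=2^{-k}/N_k$ is unnecessary: finiteness of the total mass only requires $\sum_k c_k N_k<\infty$, so $c_k=1/(k^2N_k)$ suffices, and then $M_k=C_kc_k^{-1/2}=kN_k^{3/2}$ is polynomial in $k$ (note $N_k=\binom{n+k}{n}=O(k^n)$ for fixed $n$), giving $M_k^{1/k}\to 1$ with no sparse subsequence at all. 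This is exactly the paper's choice (it uses weights $\frac{1}{k^2m_k}$ and the $L^1$ norm). Your sparse-subsequence repair, as written, has the inequality pointed the wrong way: for $p_k$ with $k_{j-1}<k\le k_j$ you must compare $C_{k_j}c_{k_j}^{-1/2}$ against $(1+\epsilon)^k$ where $k$ may be as small as $k_{j-1}+1$, so the condition you need is $\log\bigl(C_{k_j}c_{k_j}^{-1/2}\bigr)=o(k_{j-1})$, not $o(k_j)$; making the $k_j$ grow \emph{faster} makes this constraint \emph{harder} to meet, not easier. One further point the paper is careful about and you are not: if $K$ is finite or lies in a proper algebraic subvariety, then $\dim\mathcal P_k(K)=m_k<N_k$ and the full $N_k\times N_k$ Vandermonde vanishes identically, so ``Fekete points of order $k$'' must be taken relative to a basis of $\mathcal P_k|_K$ (of cardinality $m_k$) for the Lagrange formula to make sense; with that reduction, $m_k\le N_k$ and the same polynomial bound goes through.
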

 
 \begin{proof} If $K$ is a finite set, any measure $\nu$ which puts positive mass at each point of $K$ will work. If $K$ has infinitely many points, letting $m_k :=$dim$\mathcal P_k(K)$, where $\mathcal P_k(K)$ denotes the polynomials of degree at most $k$ restricted to $K$, we have $m_k\to \infty$ and $m_k\leq N_k={n+k\choose k}=0(n^k)$. For each $k$, let 
 $$\mu_k :=\frac{1}{m_k} \sum_{j=1}^{m_k} \delta (z_j^{(k)})$$
 where $\{z_j^{(k)}\}_{j=1,...,m_k}$ is a set of Fekete points of order $k$ for $K$ relative to the vector space $\mathcal P_k(K)$; i.e., if $\{e_1,...,e_{m_k}\}$ is any basis for $\mathcal P_k(K)$, 
\begin{equation}\label{fekhere}\bigl |VDM(z_1^{(k)},...,z_{m_k}^{(k)})\bigr|:=\bigl |\det [e_i(z_j^{(k)})]_{i,j=1,...,m_k}\bigr|=\max_{q_1,...,q_{m_k}\in K}\bigl |\det[e_i(q_j)]_{i,j=1,...,m_k}\bigr|.\end{equation}
 Define
 $$\nu:=\sum_{k=1}^{\infty} \frac{1}{k^2}\mu_k.$$
 If $p\in \mathcal P_k(K)$, we have
 $$p(z)=\sum_{j=1}^{m_k} p(z_j^{(k)}) l_j^{(k)}(z)$$
 where 
 $$l_j^{(k)}(z)=\frac{VDM(z_1^{(k)},...,z_{j-1}^{(k)},z,z_{j+1}^{(k)},...,z_{m_k}^{(k)})}{VDM(z_1^{(k)},...,z_{m_k}^{(k)})}\in \mathcal P_k(K)$$ with $l_j^{(k)}(z_k^{(k)})=\delta_{jk}$. We have  
 $||l_j^{(k)}||_K=1$ from (\ref{fekhere}) and hence
 $$||p||_K \leq \sum_{j=1}^{m_k} |p(z_j^{(k)})|.$$
 On the other hand, 
 $$||p||_{L^1(\nu)}\geq  \frac{1}{k^2}\int_K |p|d\mu_k$$
 $$= \frac{1}{k^2m_k}\sum_{j=1}^{m_k} |p(z_j^{(k)})|.$$
 Thus we have
 $$||p||_K \leq k^2m_k ||p||_{L^1(\nu)}\le \sqrt{\nu(K)}k^2m_k ||p||_{L^2(\nu)}.$$
\end{proof}

\begin{remark} Fekete points for a general compact set $K$ are difficult to find; thus the previous result is nonconstructive. For a more constructive approach, let $\{{\bf A_k}\}_{k=1,...}:=\{a_1^{(k)},...,a_{M_k}^{(k)}\}_{k=1,...}$ with $a_j^{(k)}\in K$ and $M_k\geq m_k$ be a {\it weakly admissible mesh} (WAM) for $K$: this means that \begin{equation}\label{wam}||p||_K \leq C_k ||p||_{{\bf A_k}} \ \hbox{for all} \ p\in \mathcal P_k(K)
\end{equation}
where both sequences $\{M_k\}$ and $\{C_k\}$ grow polynomially in $k$. Such sets are often very easy to construct (cf., \cite{calvilev}). Replacing the Fekete points $\{z_j^{(k)}\}_{j=1,...,m_k}$ of order $k$ for $K$ in the previous proof by Fekete points $\{f_j^{(k)}\}_{j=1,...,m_k}$ of order $k$ for the finite set ${\bf A_k}$, and defining the measures $\mu_k :=\frac{1}{m_k} \sum_{j=1}^{m_k} \delta (f_j^{(k)})$, using (\ref{wam}) it is straightforward to see that $\nu:=\sum_{k=1}^{\infty} \frac{1}{k^2}\mu_k$ satisfies the estimate 
$$||p||_K \leq C_kk^2m_k ||p||_{L^1(\nu)} \ \hbox{for all} \ p\in \mathcal P_k(K),$$
verifying that $(K,\nu)$ satisfies a Bernstein-Markov property. Moreover, there are known algorithms for computing {\it approximate Fekete points} (cf., \cite{SV}). 

\end{remark}

We illustrate how a simple {\it mass density condition} on a measure is sufficient for a Bernstein-Markov property. The argument we give is modeled on that given in \cite{BLmass} and goes back to Ullman in the univariate case. Below, $B(z,r):=\{w \in \C^n:|w-z|<r\}$.

\begin{proposition} \label{mdprop} Let $\mu$ be a measure such that supp$(\mu)=K$ is a regular compact set and $\mu$ satisfies the following mass density condition: there exists $r_0>0$ and $T>0$ with $\mu(B(z,r))\geq r^T$ for all $z\in K$ and $r<r_0$.  Then $(K,\mu)$ satisfy a Bernstein-Markov property.
\end{proposition}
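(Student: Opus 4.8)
The plan is to establish the Bernstein--Markov property in the equivalent form (\ref{used}): given $\epsilon>0$, I want a $k$-independent constant $C(\epsilon)$ with $||p||_K\le C(\epsilon)(1+\epsilon)^k||p||_{L^2(\mu)}$ for all $p\in\mathcal P_k$. After normalizing $||p||_K=1$ and fixing $z_0\in K$ with $|p(z_0)|=1$, the task becomes a lower bound $||p||_{L^2(\mu)}^2\ge c(\epsilon)(1+\epsilon)^{-2k}$, and the idea is that the $\mu$-mass of a single small ball centered at $z_0$ already suffices. First I would localize using regularity: since $K$ is regular, $V_K=V_K^*$ is continuous, and $V_K\equiv0$ on $K$ (the function $u\equiv0$ is admissible in (\ref{vkno})), so compactness gives $\delta=\delta(\epsilon)\in(0,r_0)$ with $V_K<\epsilon$ on the open neighborhood $U_\delta:=\{z:\mathrm{dist}(z,K)<\delta\}$. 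The Bernstein--Walsh estimate $|p(z)|\le||p||_K e^{kV_K(z)}$, a direct consequence of the definition (\ref{veekpoly}), then yields $|p|<e^{\epsilon k}$ on $U_\delta$, in particular on $B(z_0,\delta)$.

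Next I would produce a ball on which $|p|$ is comparable to its maximum. Put $\rho:=\min\{\delta/2,\ \mathrm{dist}(z_0,\{p=0\})\}$. On $B(z_0,\rho)$ the function $\log|p|$ is harmonic (as a function of $2n$ real variables, since $p$ is zero-free there), it vanishes at $z_0$, and it is $<\epsilon k$ throughout by the previous step. Applying Harnack's inequality to the nonnegative harmonic function $\epsilon k-\log|p|$ on $B(z_0,\rho)\supset\overline{B(z_0,\rho/2)}$ gives $\log|p|\ge-c_n\epsilon k$ on $B(z_0,\rho/2)$ with $c_n$ depending only on $n$, i.e.\ $|p|\ge e^{-c_n\epsilon k}$ there. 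Combining this with the mass density hypothesis, which applies since $z_0\in K$ and $\rho/2<r_0$,
$$||p||_{L^2(\mu)}^2\ \ge\ \int_{B(z_0,\rho/2)}|p|^2\,d\mu\ \ge\ e^{-2c_n\epsilon k}\,\mu\big(B(z_0,\rho/2)\big)\ \ge\ e^{-2c_n\epsilon k}\,(\rho/2)^T .$$

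The step I expect to be the real obstacle is showing that $\rho$ is not too small --- equivalently, bounding below the distance from $z_0$, a point where $|p|$ is maximal on $K$, to the zero set of $p$. If $\mathrm{dist}(z_0,\{p=0\})\ge\delta/2$ then $\rho=\delta/2$ and $(\rho/2)^T$ is a fixed constant. Otherwise, take a nearest zero $a$, set $t:=|z_0-a|=\rho\le\delta/2$, and restrict $p$ to the complex line through $z_0$ and $a$: this produces a one-variable polynomial $g$ of degree $\le k$ with $|g|\le e^{\epsilon k}$ on the disc of radius $\delta$ about $z_0$ in that line, $|g(z_0)|=1$, and $g(a)=0$. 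Writing $g=(\cdot-a)h$ and applying the maximum principle to $h$ on that disc gives $1/t=|h(z_0)|\le 2e^{\epsilon k}/\delta$, so in all cases $\rho\ge\tfrac{\delta}{2}e^{-\epsilon k}$. This bound is only exponentially small in $\epsilon k$, but it is exactly enough: substituting into the display gives $||p||_{L^2(\mu)}^2\ge(\delta/4)^T e^{-(2c_n+T)\epsilon k}$, hence $||p||_K=1\le(4/\delta)^{T/2}\big(e^{(c_n+T/2)\epsilon}\big)^k||p||_{L^2(\mu)}$. For a prescribed $\epsilon'>0$, choosing $\epsilon$ with $e^{(c_n+T/2)\epsilon}\le 1+\epsilon'$ and setting $C(\epsilon'):=(4/\delta(\epsilon))^{T/2}$ verifies (\ref{used}), since $\delta$ --- hence the constant $(4/\delta)^{T/2}$ --- is fixed once $\epsilon$ is, while $\epsilon$ may be taken as small as we please.
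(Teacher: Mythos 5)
Your proof is correct, and its overall architecture coincides with the paper's: use regularity plus the Bernstein--Walsh estimate (\ref{bwin}) to get $|p|\le \|p\|_K e^{\epsilon k}$ on a $\delta$-neighborhood of $K$, show that $|p|$ stays comparable to $\|p\|_K$ on a ball of radius roughly $\delta e^{-\epsilon k}$ centered at a point where the sup is attained, and then invoke the mass density hypothesis on that ball. Where you diverge is in the implementation of the comparability step, i.e.\ the analogue of (\ref{supnorm}). The paper slices $p$ along the real segment from $w$ to $z$, applies the Cauchy estimate to $U'(t)$ for $U(t)=p_k(w+at)$ on $|t|<r_k$ with $r_k=(\delta/3)e^{-k\epsilon}$, and integrates $U'$ to get the clean bound $|p_k(z)-p_k(w)|\le \tfrac12|p_k(w)|$ --- no mention of the zero set is needed. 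You instead apply Harnack's inequality to $\epsilon k-\log|p|$ on a ball avoiding the zeros of $p$, which forces you to prove the auxiliary Schwarz-type bound $\mathrm{dist}(z_0,\{p=0\})\ge \tfrac{\delta}{2}e^{-\epsilon k}$ by restricting to a complex line and factoring out the nearest zero. That extra step is correct (and the exponential loss $e^{-c_n\epsilon k}$ in your lower bound for $|p|$, weaker than the paper's factor $\tfrac12$, is harmless since $\epsilon$ is at your disposal), but it is work the gradient estimate avoids; the trade-off is that your Harnack argument only uses $\log|p|$ as a harmonic/plurisubharmonic object, a viewpoint that adapts to settings where one controls $\log|p|$ rather than $p$ itself. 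Two small points to make explicit if you write this up: the disc of radius $\delta$ in the complex line about $z_0$ should be taken open (or of radius slightly less than $\delta$) so that it sits inside $\{ \mathrm{dist}(\cdot,K)<\delta\}$, and the harmonicity of $\log|p|$ on the zero-free ball should be justified via a holomorphic logarithm on the (simply connected) ball, so that $\log|p|$ is pluriharmonic and hence harmonic in the $2n$ real variables.
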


\begin{proof} Given $\epsilon >0$, choose $\delta >0$ so that $V_K(z) \leq \epsilon$ if dist$(z,K)\leq \delta$. From the definition of $V_K$, for such $z$ we have for $p_k\in \mathcal P_k$, 
 \begin{equation}\label{sup} |p_k(z)|\leq ||p_k||_K \cdot e^{k\epsilon}.\end{equation}
We consider $k$ sufficiently large so that $r_k:=(\delta/3)e^{-k\epsilon}<r_0$. Given $p_k\in \mathcal P_k$ we fix $w\in K$ with $|p_k(w)|=||p_k||_K$. We show that 
\begin{equation}\label{supnorm} 
|p_k(z)|\geq \frac{1}{2}|p_k(w)| \ \hbox{for} \ z\in K \ \hbox{with} \ |z-w|<
r_{k}.%\frac{\delta}{4}\cdot e^{-k\epsilon}.
\end{equation}
Suppose we have proved (\ref{supnorm}). To verify the Bernstein-Markov property, we first observe that 
$$\mu (K\cap B(w,r_k))\geq r_k^T.% = e^{-kT\epsilon}.
$$
Then
$$\int_K |p_k(z)|d\mu(z)\geq \int_{K\cap B(w,r_k)}|p_k(z)|d\mu(z)$$
$$\geq \min_{z\in K\cap B(w,r_k)} |p_k(z)|\cdot \mu (K\cap B(w,r_k))\geq \frac{1}{2}|p_k(w)| \cdot (\delta/3)^{T}e^{-kT\epsilon},$$
verifying (\ref{used}) with $L^1$ norm.

To prove (\ref{supnorm}), for fixed $z\in K$ with $|z-w|<r_{k}
%\frac{\delta}{4}\cdot e^{-k\epsilon}
$ consider $$U(t):=p_k(w+at) \ \hbox{where} \ a=\frac{z-w}{|z-w|}.$$
Then $U(0)=p_k(w)$ and $U(|z-w|) = p_k(z)$. Furthermore, if $|t|<\delta$, then 
dist$(w+at,K)\leq \delta$ so that $V_K(w+at) \leq \epsilon$ and from (\ref{sup}),
\begin{equation}\label{last}|U(t)|=|p_k(w+at)|\leq |p_k(w)|\cdot e^{k\epsilon}.\end{equation}
We apply the Cauchy estimates to $U'(t)$ in $|t|<r_{k}$ to obtain
$$||U'||_{ |t|<r_{k}}\leq \frac{3}{2\delta} ||U||_{ |t|<\delta}.$$
Using
$$U(|z-w|)-U(0)=\int_0^{|z-w|}U'(t)dt$$
we find that for $|z-w|<r_{k}
%\frac{\delta}{4}\cdot e^{-k\epsilon}<\delta/2
$ with $z\in K$ (using (\ref{last})) 
$$|p_k(z)-p_k(w)|\leq |z-w|\cdot ||U'||_{ |t|<r_{k}}\leq 
\frac{\delta}{3}\cdot e^{-k\epsilon}\cdot \frac{3}{2\delta} ||U||_{ |t|<\delta}$$
$$\leq \frac{1}{2} e^{-k\epsilon}\cdot |p_k(w)|\cdot e^{k\epsilon}=\frac{1}{2} |p_k(w)|$$
which proves (\ref{supnorm}). 
\end{proof}

An important observation is that to obtain equation (\ref{sup}) at the very beginning, in conjunction with regularity of $K$ we have used the {\it Bernstein-Walsh estimate}: for a compact set $K$, a polynomial $p$ satisfies
\begin{equation}\label{bwin} |p(z)|\leq ||p||_K\cdot e^{deg (p) \cdot V_K(z)}, \ z\in \C^n. \end{equation}
This is valid in any number of dimensions and follows from (\ref{veekpoly}).

In one variable the property that a measure $\mu$ be determining; i.e., for all carriers $E$ of $\mu$, $V_E^*=V_K^*$, can be re-interpreted as the capacity $C(E)$ of each carrier is equal to that of $K$; i.e., $\mu(E) =\mu(K)$ implies $C(E)=C(K)$. Here, the inner and outer capacities associated to the capacitary set function defined for compact sets in (\ref{cap}) coincide for a Borel set $E$; it is this notion of capacity for Borel sets we use (see Appendix A.I in \cite{ST}). In $\C^n$ for $n>1$, this re-interpretation remains valid provided we use the notion of {\it relative capacity}: fixing a ball $B$ containing $K$, for $E$ a Borel subset of $B$, define
\begin{equation}\label{relcap}C(E):=\sup \{\int_E (dd^cu)^n: u \ \hbox{psh in} \ B, \ 0\leq u \leq 1\}\end{equation}
(we use the same notation although the notions are different for $n=1$ and $n>1$). 
There are stronger results than Proposition \ref{mdprop} which show
that one only needs a type of denseness ``in the mean'' with respect to capacity. The following result is Theorem 4.2.3 of \cite{ST} in the univariate case while the exact analogue in several complex variables with the relative  capacity is Theorem 2.2 in \cite{BLmass}. 

\begin{theorem} \label{suffmd} Let $\mu$ be a measure such that supp$(\mu)=K$ is a regular compact set and suppose there exists $T>0$ such that
\begin{equation}\label{md??} \lim_{r\to 0^+} C(\{z:\mu(B(z,r)) \geq r^T\})=C(K).\end{equation}
Then $(K,\mu)$ satisfy a Bernstein-Markov property.
\end{theorem}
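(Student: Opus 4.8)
The plan is to adapt the argument of Proposition~\ref{mdprop}, relaxing the requirement that the mass density bound $\mu(B(z,r))\ge r^T$ hold at \emph{every} point of $K$ to the requirement that it hold on a set whose relative capacity is asymptotically full. The key structural change is that one no longer localizes at the point $w\in K$ where $|p_k(w)|=\|p_k\|_K$, but rather at a nearby good point. Concretely, fix $\epsilon>0$ and, as before, choose $\delta>0$ so that $V_K(z)\le\epsilon$ whenever $\mathrm{dist}(z,K)\le\delta$; this uses regularity of $K$ exactly as in Proposition~\ref{mdprop} and gives the Bernstein--Walsh-type bound $|p_k(z)|\le\|p_k\|_K e^{k\epsilon}$ there. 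For $r$ small, set $E_r:=\{z:\mu(B(z,r))\ge r^T\}$, so by hypothesis $C(E_r)\to C(K)$ as $r\to0^+$.

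The main new ingredient is a comparison between the sup-norm of $p_k$ over $K$ and its sup-norm over $E_r$ (or over $E_r$ fattened slightly), using the fact that sets of nearly full capacity cannot support a large discrepancy in the Bernstein--Walsh/extremal-function sense. Precisely, I would invoke the relationship between capacity and the extremal function: if $C(E_r)$ is close to $C(K)$, then $V_{E_r}^*$ is close to $V_K^*$ (in the univariate case this is quantitative via $\log C(E_r)=-V_{E_r}^*(\infty)$-type formulas / Frostman's theorem; in $\C^n$, $n>1$, one uses the analogous statement for the relative extremal function, as in \cite{BLmass}). Consequently there is a constant $c_k$ with $c_k^{1/k}\to1$ such that $\|p_k\|_K\le c_k\,\|p_k\|_{E_r}$ for all $p_k\in\mathcal P_k$; this is the replacement for the trivial inequality $\|p_k\|_K=|p_k(w)|$ used before. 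Now choose $w\in E_r$ (or within $\delta$ of it) with $|p_k(w)|\ge \frac{1}{2}\|p_k\|_K/c_k$, apply the Cauchy-estimate argument of Proposition~\ref{mdprop} verbatim to get $|p_k(z)|\ge\frac12|p_k(w)|$ on $B(w,r_k)\cap K$ with $r_k=(\delta/3)e^{-k\epsilon}$, and then integrate:
\[
\|p_k\|_{L^1(\mu)}\ \ge\ \int_{B(w,r_k)}|p_k|\,d\mu\ \ge\ \tfrac12 |p_k(w)|\cdot\mu(B(w,r_k))\ \ge\ \tfrac12 |p_k(w)|\, r_k^{T},
\]
where the last step uses $w\in E_{r_k}$ once $k$ is large enough that $r_k<r_0$-type threshold. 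Combining with $|p_k(w)|\ge\frac{1}{2c_k}\|p_k\|_K$ yields $\|p_k\|_K\le 4c_k (\delta/3)^{-T}e^{kT\epsilon}\|p_k\|_{L^1(\mu)}$, and since $\epsilon$ is arbitrary and $c_k^{1/k}\to1$, this is the Bernstein--Markov property (with $L^1$ in place of $L^2$, which is harmless by the remarks in the introduction).

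The step I expect to be the main obstacle is making precise the passage from ``$C(E_r)\to C(K)$'' to ``$\|p_k\|_K\le c_k\|p_k\|_{E_r}$ with $c_k^{1/k}\to1$,'' and in particular handling the $\limsup$ over $r$ correctly: one wants a single sequence $r=r_k\to0$ (coupled to $\epsilon$ via $r_k=(\delta/3)e^{-k\epsilon}$) for which both the capacity is nearly full \emph{and} the mass bound $r_k^T$ is usable, while the extremal-function comparison degrades at a sub-exponential rate. In the univariate case this is exactly the content (and bookkeeping) of Theorem~4.2.3 of \cite{ST}; in $\C^n$ one must replace logarithmic capacity and $V_E^*$ by the relative capacity (\ref{relcap}) and the relative extremal function, and quote the pluripotential-theoretic continuity/quasicontinuity results behind Theorem~2.2 of \cite{BLmass}. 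A secondary technical point is that $E_r$ need not be closed, so one works with a slightly enlarged compact set or uses inner regularity of capacity to replace $E_r$ by a compact subset of almost the same capacity before localizing.
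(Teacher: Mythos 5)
Your proposal is correct and follows essentially the same route as the paper's proof: replace the trivial localization at the sup-point of $p_k$ on $K$ by localization on a compact set $E$ of nearly full capacity on which the mass density bound holds, using as the key lemma that $C(E)>C(K)-\delta$ forces $V_E^*\le V_K+\epsilon$ and hence $\|p_k\|_K\le e^{\epsilon\,\deg p}\|p_k\|_E$. The only (harmless) bookkeeping point to make explicit is that, since $|p_k(w)|$ is now only comparable to $\|p_k\|_K$ up to the subexponential factor $c_k$, the radius $r_k$ in the Cauchy-estimate step must be shrunk accordingly (e.g.\ to $(\delta/3)e^{-2k\epsilon}/c_k$) --- exactly the degradation you anticipated.
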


\begin{proof} We just indicate modifications needed from the proof of Proposition \ref{mdprop}; for details, we refer the reader to pp. 112-115 of \cite{ST} for the univariate case and pp. 4764-4765 of \cite{BLmass} for the multivariate version. By taking $\delta >0$ smaller, if necessary, {\it for any compact subset $E$ of $K$ with $C(E)>C(K) -\delta$ we have $V_E^*\leq V_K +\epsilon$.} This statement is one of the main points of \cite{BLmass} in $\C^n$  for $n>1$ with the relative capacity (\ref{relcap}). The Bernstein-Walsh estimate (\ref{bwin}) for $E$ gives
\begin{equation}\label{bwin2} ||p||_K\leq ||p||_Ee^{\epsilon \cdot deg(p)}. \end{equation}
We choose such an $E$ so that, in addition, for $k$ sufficiently large ($r_k$ in the proof of Proposition \ref{mdprop} sufficiently small),
$$\mu(B(z,r_k)) \geq r_k^T \ \hbox{if} \ z\in E.$$
This follows from the mass density hypothesis (\ref{md??}). For $p_k\in \mathcal P_k$, we choose $w\in E$ so that $|p_k(w)|=||p_k||_E$ and we obtain the analogue of (\ref{supnorm}):
$$
|p_k(z)|\geq \frac{1}{2}|p_k(w)| \ \hbox{for} \ z\in E \ \hbox{with} \ |z-w|<
r_{k}.
$$
Using (\ref{bwin2}) for $p=p_k$, the rest of the proof proceeds as in Proposition \ref{mdprop}.
\end{proof}

 \begin{remark} There are no known examples of measures $\mu$ with (regular) support $K$ such that $(K,\mu)$ satisfy a Bernstein-Markov property but for which $\mu$ does {\bf not} satisfy (\ref{md??}) for any $T>0$. Letting $t:=e^{-T}$, the sufficient condition (\ref{md??}) can be rewritten as
$$\exists t<1,\quad C(\{z: \mu(B(z,r))\geq t^{\log(1/r)}\})\to C(K) \quad\text{as }r\to 0.$$
If the regular set $K$ is the interval $[0,1]$, Theorem 4.2.8 of \cite{ST} gives a {\it necessary} condition for $(K,\mu)$ to satisfy a Bernstein-Markov property:
 $$\forall t<1,\quad C(\{z: \mu(B(z,r))\geq t^{1/r}\})\to  C(K) \quad\text{as }r\to 0.$$
Thus there is a gap between these conditions; i.e., the logarithmic vs. the linear rates in the exponent of $t$.
 \end{remark}
\medskip

What about sufficient conditions for $(K,\mu,Q)$ to satisfy a {\it weighted} Bernstein-Markov property? Suppose $K\subset \C^n$ is compact and nonpluripolar and $\mu$ is a measure on $K$ with $(K,\mu)$ satisfying a Bernstein-Markov property. For appropriate pairs $(K,\mu)$, it may be the case that for {\it any} continuous weight function $w=e^{-Q}$ on $K$, the triple $(K,\mu,Q)$ satisfies a weighted Bernstein-Markov property. If this is the case, we call $\mu$ a {\it strong Bernstein-Markov measure} for $K$. We have the following. 

\begin{proposition} \label{mdprop2} Let $\mu$ be a measure such that supp$(\mu)=K$ is a compact set with the property that for all $z\in K$, $K\cap \bar B(z,r)$ is a regular compact set for $r$ sufficiently small. Suppose $\mu$ satisfies the mass density condition of Proposition \ref{mdprop}: there exists $r_0>0$ and $T>0$ with $\mu(B(z,r))\geq r^T$ for all $z\in K$ and $r<r_0$. Then $\mu$ is a strong Bernstein-Markov measure for $K$.
\end{proposition}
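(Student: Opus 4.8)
The plan is to reduce to the unweighted argument of Proposition \ref{mdprop} by working on small pieces of $K$ on which both the weight and the relevant extremal function are nearly constant, and to patch the local estimates together by compactness. First I would fix $\epsilon,\eta>0$ (both to be sent to $0$ at the end). Since $Q$ is continuous on the compact set $K$, the weight $w=e^{-Q}$ is continuous with $0<m\le w\le M<\infty$ on $K$ and uniformly continuous, so there is $\rho_\eta>0$ with $w(z')/w(z'')\le e^{\eta}$ whenever $z',z''\in K$ and $|z'-z''|\le 4\rho_\eta$. Using the hypothesis, for each $z\in K$ pick $r(z)>0$ with $K\cap\bar B(z,r)$ regular for $0<r\le r(z)$, put $\rho(z):=\min\{r(z)/2,\rho_\eta\}$, and extract a finite subcover $\{B(z_i,\rho_i)\}_{i=1}^N$ of $K$ from $\{B(z,\rho(z))\}_{z\in K}$, with $\rho_i:=\rho(z_i)$ and $z_i\in K$. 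Set $K_i:=K\cap\bar B(z_i,2\rho_i)$: by construction $K_i$ is a regular compact set and $w$ varies on $K_i$ by a factor at most $e^{\eta}$. Since $K_i$ is regular, $V_{K_i}$ is continuous and equals $0$ on $K_i$, so there is $\delta_i>0$ with $V_{K_i}(z)\le\epsilon$ whenever $\mathrm{dist}(z,K_i)\le\delta_i$; finally set $\delta:=\min_i\min\{\delta_i,\rho_i\}>0$. This uniform $\delta$ is the crucial output of the covering step.

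Next, given $p_k\in\mathcal P_k$, choose $\zeta_0\in K$ with $|w(\zeta_0)^kp_k(\zeta_0)|=\|w^kp_k\|_K$ and an index $i$ with $\zeta_0\in B(z_i,\rho_i)$; then $\zeta_0\in K_i$ and $\bar B(\zeta_0,\delta)\subset\bar B(z_i,2\rho_i)$. Since $\zeta_0$ maximizes $|w^kp_k|$ and $w$ varies by at most $e^{\eta}$ on $K_i$, for $z\in K_i$ one gets $|p_k(z)|\le (w(\zeta_0)/w(z))^k|p_k(\zeta_0)|\le e^{k\eta}|p_k(\zeta_0)|$, hence $\|p_k\|_{K_i}\le e^{k\eta}|p_k(\zeta_0)|$. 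Combining this with the Bernstein-Walsh estimate (\ref{bwin}) for $K_i$ and $V_{K_i}\le\epsilon$ on the $\delta$-neighbourhood of $\zeta_0$ (using $\zeta_0\in K_i$) gives $|p_k(z)|\le e^{k(\epsilon+\eta)}|p_k(\zeta_0)|$ for $|z-\zeta_0|<\delta$. Now I would run the Cauchy-estimate argument from the proof of Proposition \ref{mdprop} verbatim, applied to $U(t):=p_k(\zeta_0+at)$ with $a$ a unit vector, but with $r_k:=(\delta/3)e^{-k(\epsilon+\eta)}$ in place of $(\delta/3)e^{-k\epsilon}$: for $k$ large enough that $r_k<\min\{\delta,r_0\}$ this yields $|p_k(z)|\ge\frac12|p_k(\zeta_0)|$ for all $z$ with $|z-\zeta_0|<r_k$. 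Such $z\in K$ also lie in $K_i$, so $w(z)\ge e^{-\eta}w(\zeta_0)$ there, whence $|w(z)^kp_k(z)|\ge\frac12 e^{-k\eta}\|w^kp_k\|_K$. Integrating against $\mu$ over $B(\zeta_0,r_k)$ and invoking the mass density bound $\mu(B(\zeta_0,r_k))\ge r_k^{T}$, I obtain $\|w^kp_k\|_{L^2(\mu)}^2\ge\tfrac14 e^{-2k\eta}(\delta/3)^{T}e^{-kT(\epsilon+\eta)}\|w^kp_k\|_K^2$, i.e.\ $\|w^kp_k\|_K\le C\,e^{k(\eta+T(\epsilon+\eta)/2)}\|w^kp_k\|_{L^2(\mu)}$ with $C=2(\delta/3)^{-T/2}$ independent of $k$ and of $p_k$. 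Letting $\epsilon,\eta\downarrow0$ makes the exponential rate tend to $1$, which is precisely the weighted Bernstein-Markov property (\ref{bmwk}) for $(K,\mu,Q)$; as $Q$ was an arbitrary continuous weight, $\mu$ is a strong Bernstein-Markov measure for $K$.

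The main obstacle is the uniformity issue flagged above: the local Bernstein-Walsh radius $\delta$ controlling where the extremal function of a small piece around $\zeta_0$ is $\le\epsilon$ a priori depends on $\zeta_0$, which in turn depends on $p_k$. Because regularity is not inherited by subsets, one cannot simply localize to a ball centred at the (varying) maximizing point $\zeta_0$ and quote a single modulus of continuity; the remedy is to localize instead to the members $K_i$ of a \emph{fixed} finite cover, each regular by the hypothesis, and to take $\delta$ to be the minimum of the finitely many associated radii — this is exactly what the hypothesis ``$K\cap\bar B(z,r)$ regular for small $r$'' buys. Everything else is a routine adaptation of Proposition \ref{mdprop}, with the continuity of $w$ supplying the harmless extra factors $e^{\pm k\eta}$ that wash out in the limit.
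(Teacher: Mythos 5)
Your proof is correct and follows exactly the route the paper indicates in Remark \ref{mdremark} (a modification of Proposition \ref{mdprop} and Theorem \ref{suffmd}, deferring details to Theorem 4.13 of \cite{BLTW}): localize to a fixed finite cover by regular pieces $K\cap\bar B(z_i,2\rho_i)$, apply Bernstein--Walsh to each piece near the weighted maximizer, and absorb the continuous weight via a uniform-continuity factor $e^{\pm k\eta}$. Your covering argument correctly supplies the uniform radius $\delta$ that the paper leaves implicit, so the write-up is a faithful completion of the sketched proof.
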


\begin{remark} \label{mdremark} We remark that if $n=1$, any regular compact set $K\subset \C$ with $\C\setminus K$ connected satisfies the condition that for all $z\in K$, $K\cap \bar B(z,r)$ is a regular compact set for $r$ sufficiently small. The proof of the proposition is a modification of the proofs of Proposition \ref{mdprop} and Theorem \ref{suffmd}. We refer the reader to Theorem 4.13 of \cite{BLTW} where a generalization of the above result in the univariate case is proved under somewhat weaker hypotheses on $K$.
\end{remark}

Recall from the introduction that if $K'=\{z: |z| \leq 1 \}$ and $d\nu = \frac{1}{2\pi} d\theta$ on the boundary, the function $Q(z) = |z|^2$ gives an example where $(K',\nu,Q)$ does not satisfy a weighted Bernstein-Markov property. Here, the mass density condition is not satisfied (it is satisfied for $K=\{z:|z|=1\}$).

\begin{remark} \label{ldp2} The strong Bernstein-Markov property is useful in proving {\it large deviation principles} associated to probability distributions arising in (pluri-)potential theoretic frameworks. In the setting of Remark \ref{ldp}, if $\nu$ is a  strong Bernstein-Markov measure on $K$, a large deviation principle says, roughly, that for each $Q$ continuous on $K$, given an array $X=\{z_j^{(k)}\}\in \chi$, the probability that the sequence of measures $\{ \nu_k\}$ in (\ref{nuk}) satisfies $\nu_k \to \mu\neq \mu_{K,Q}$ goes to $0$ like $e^{-k^2[I^Q(\mu)- I^Q(\mu_{K,Q})]}$. Here, $\mathcal I(\mu):=I^Q(\mu)- I^Q(\mu_{K,Q})\geq 0$ is the {\it rate function} and $k^2$ is the {\it speed}. Thus one gets a quantitative description of how much the sequence of measures $\{ \nu_k\}$ associated to a random array in $K$ as in Remark \ref{ldp} can deviate from the equilibrium measure $\mu_{K,Q}$. There are analogous results in the multidimensional case. We refer the reader to \cite{PELD} and \cite{LPLD} for more information and further references.
\end{remark}

A weighted situation that arises naturally in the univariate case is when  
$$Q(z)=-U_{\nu}(z)=\int_P \log |z-\zeta| d\nu(\zeta)$$
is minus the logarithmic potential of a measure $\nu$ supported in a set $P$ disjoint from $K$. Note since $Q$ is harmonic outside $P$ it is, in particular, continuous on $K$. This will appear in the next section (cf., Proposition \ref{wtvsrat}).

\section{Rational Bernstein-Markov in $\C$} 

In the next two sections, we work exclusively in $\C$. Let $P$ be a fixed compact set in $\C$. For $k=1,2,...$ let $\mathcal R_k(P)$ denote the rational functions $r_k=p_k/q_k$ where $p_k,q_k\in \mathcal P_k$ and all zeros of $q_k$ lie in $P$. Following \cite{P}, for $K\subset \C$ compact with $K\cap P=\emptyset$ and a measure $\mu$ on $K$ we will say that $\{K,\mu,P\}$ has the {\it rational Bernstein-Markov property} if for any sequence $\{r_k\}$ with $r_k\in \mathcal R_k(P)$, 
\begin{equation}\label{rmk} \limsup_{k\to \infty} \bigg(\frac{||r_k||_K}{||r_k||_{L^2(\mu)}}\bigg)^{1/k} \leq 1.\end{equation}

We have the following \cite{P}.

\begin{proposition}\label{wtvsrat} Let $K\subset \C$ be a nonpolar compact set and $\mu$ a measure on $K$. Let $P$ be a compact set with $K\cap P=\emptyset$. The following are equivalent:
\begin{enumerate}
\item $\{K,\mu,P\}$ has the rational Bernstein-Markov property; 
\item For all measures $\sigma$ on $P$ with mass at most one, and corresponding potentials $U_{\sigma}$, $(K,-U_{\sigma},\mu)$ has the weighted (polynomial) Bernstein-Markov property.

\end{enumerate}
\end{proposition}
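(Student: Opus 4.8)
The plan is to prove the two implications separately, in both cases exploiting the elementary identity relating rational functions with poles in $P$ to weighted polynomials. Given $r_k = p_k/q_k \in \mathcal R_k(P)$ with $q_k(z) = c\prod_{j=1}^m (z-\zeta_j)$, $\zeta_j \in P$, $m \le k$, write $\sigma_k := \frac{1}{k}\sum_{j=1}^m \delta(\zeta_j)$, a measure on $P$ of mass $m/k \le 1$. Then $|q_k(z)|^{1/k} = |c|^{1/k} e^{-U_{\sigma_k}(z)}$, so $|r_k(z)| = |p_k(z)| \cdot e^{k U_{\sigma_k}(z)}/|c| = |p_k(z)| \cdot |w_k(z)|^k / |c|$ where $w_k = e^{-Q_k}$, $Q_k = -U_{\sigma_k}$. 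In other words, up to the harmless multiplicative constant $|c|$ (which cancels when we take ratios of norms of $r_k$), a rational function in $\mathcal R_k(P)$ \emph{is} a weighted polynomial of degree $k$ for a weight of the form $e^{k U_{\sigma_k}}$ with $\sigma_k$ supported in $P$ of mass $\le 1$. This correspondence is the engine of both directions.

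For the implication $2. \Rightarrow 1.$: given a sequence $\{r_k\}$, $r_k \in \mathcal R_k(P)$, we get the associated sequence $\{\sigma_k\}$ as above. The subtlety is that the weight $Q_k = -U_{\sigma_k}$ changes with $k$, whereas hypothesis 2. gives a Bernstein-Markov constant for each \emph{fixed} $\sigma$. I would handle this by a compactness argument: the $\sigma_k$ lie in the weak-$*$ compact set of measures on $P$ of mass $\le 1$; pass to a subsequence $\sigma_{k_i} \to \sigma$. One must then show that the Bernstein-Markov estimate for the limiting weight $-U_\sigma$, together with continuity/semicontinuity properties of $\sigma \mapsto U_\sigma$ on $K$ (recall $U_\sigma$ is harmonic, hence continuous, on $K$ since $K \cap P = \emptyset$, and $U_{\sigma_{k_i}} \to U_\sigma$ uniformly on $K$ by standard potential theory when the masses converge), yields $\limsup (\|r_{k_i}\|_K/\|r_{k_i}\|_{L^2(\mu)})^{1/k_i} \le 1$. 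Since every subsequence has a further subsequence along which this holds, the full $\limsup$ is $\le 1$. I expect the uniform convergence $U_{\sigma_{k_i}} \to U_\sigma$ on the compact set $K$ disjoint from $P$ to be genuinely available and not too painful, but some care is needed if the masses $m_{k_i}/k_i$ do not converge to the mass of $\sigma$ — one should allow the limit $\sigma$ to have mass strictly less than $\limsup m_k/k$; this still gives a valid weight covered by hypothesis 2.

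For the implication $1. \Rightarrow 2.$: fix $\sigma$ on $P$ with $\|\sigma\| \le 1$ and consider weighted polynomials $p_k w^k$ with $w = e^{U_\sigma}$; we must bound $\|p_k w^k\|_K$ by $(1+\epsilon)^k \|p_k w^k\|_{L^2(\mu)}$. The idea is to approximate $U_\sigma$ on $K$ by potentials of \emph{discrete} measures $\sigma'$ with rational mass: choose $\sigma' = \frac{1}{k}\sum \delta(\zeta_j)$, $\zeta_j \in P$, with $\|U_{\sigma'} - U_\sigma\|_K < \epsilon$ (possible by discretizing $\sigma$ and using continuity of potentials on $K$; if $\|\sigma\|<1$ pad the mass freely, and if $\|\sigma\| = 1$ one gets $\lceil$ roughly $\rceil k$ poles, so one should work with $\mathcal R_{k'}(P)$ for $k'$ slightly larger than $k$, which is fine). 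Then $q_k := \prod(z-\zeta_j)$ has $|q_k|^{-1/k} = e^{U_{\sigma'}} = e^{O(\epsilon)} w$ on $K$, so $p_k w^k$ and $r_k := p_k/q_k$ differ multiplicatively by $e^{O(k\epsilon)}$ uniformly on $K$; applying the rational Bernstein-Markov property 1. to $\{r_k\}$ and converting back gives the weighted estimate with $(1+\epsilon)$ replaced by something like $(1+\epsilon)e^{O(\epsilon)}$, and letting $\epsilon \to 0$ finishes.

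The main obstacle in both directions is the same: controlling the $k$-dependence of the weight via the approximation of an arbitrary potential $U_\sigma$ on $K$ by potentials of discrete measures with poles of order $k$, with error $o(k)$ in the exponent — i.e., making the passage between ``continuous $\sigma$'' and ``$k$ point masses'' quantitatively compatible with the exponential rates in the Bernstein-Markov inequalities. Everything else (the algebraic identity $|q_k|^{1/k} = |c|^{1/k} e^{-U_{\sigma_k}}$, cancellation of constants in norm ratios, weak-$*$ compactness of measures on $P$, harmonicity of $U_\sigma$ off $P$) is routine. I would organize the write-up around the single lemma that $\|U_{\sigma'} - U_\sigma\|_K$ can be made arbitrarily small by a discrete $\sigma'$ supported in $P$ with $k\sigma'$ an integer-valued measure, deduce both implications from it plus the weighted-polynomial / rational-function dictionary, and invoke Proposition \ref{mdprop2}'s surrounding discussion (that $-U_\sigma$ is a continuous admissible weight on $K$) to legitimize speaking of the weighted Bernstein-Markov property for these weights.
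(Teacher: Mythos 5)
Your proposal is correct and rests on exactly the identity the paper itself singles out as ``the connection between these two properties,'' namely writing $r_k=p_k/q_k$ with poles in $P$ as a weighted polynomial $|p_k|e^{kU_{\sigma_k}}$ for a discrete $\sigma_k$ on $P$ of mass at most one; the survey defers the full argument to the cited reference, and your completion of the two directions (weak-$*$ compactness of $\{\sigma_k\}$ plus uniform convergence of potentials on $K$ for $2.\Rightarrow 1.$, and discretization of $\sigma$ with $o(1)$ error in $\|U_{\sigma'}-U_\sigma\|_K$ for $1.\Rightarrow 2.$) is the standard and sound way to handle the $k$-dependence of the weight. The only superfluous worry is the $\|\sigma\|=1$ case in the discretization: taking $\lfloor k\sigma(P)\rfloor\le k$ point masses of weight $1/k$ already suffices, since the $O(1/k)$ mass defect perturbs the potential on $K$ by $O(1/k)$, which is harmless.
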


The connection between these two properties is that if one takes $r_k=p_k/q_k$ with $p_k,q_k\in \mathcal P_k$ and $q_k(z)=\prod_{j=1}^{m_k}(z-z_j)$ where $z_j\in P$ and $m_k\leq k$, then 
$$|r_k(z)|=|p_k(z)|\cdot \exp {[k U_{\mu_k}(z)]}$$
for $\mu_k:=\frac{1}{k}\sum_{j=1}^{m_k} \delta(z_j)$. Thus we have written $r_k$ as a weighted polynomial of degree $k$ with $Q=-U_{\mu_k}$ which is continuous on $K$. 

Given a compact set $K\in \C$, 
$$\hat K:=\{z\in \C: |p(z)|\leq ||p||_K \ \hbox{for all polynomials} \ p\}$$ 
is the {\it polynomial hull} of $K$; it is simply the complement of the unbounded component $\Omega_K$ of $\C\setminus K$. If $K=\{z:|z|=1\}$, then $\hat K = K'=\{z: |z| \leq 1 \}$. We let $S_K$ denote the Shilov boundary of $K$ with respect to the uniform algebra $P(K)$ of uniform limits of polynomials restricted to $K$; thus $S_K$ is the smallest closed subset $S$ of $K$ such that for every polynomial $p$, 
$||p||_S = ||p||_K$. For $K'=\{z: |z| \leq 1 \}$, $S_{K'}=\{z:|z|=1\}$. Using Proposition \ref{wtvsrat}, one can show the following \cite{P}.

\begin{theorem} \label{rbmp} Let $K\subset \C$ be a nonpolar compact set and $\mu$ a measure with support $K$ such that $(K,\mu)$ has the (polynomial) Bernstein-Markov property. Let $P$ be a compact set with $K\cap P=\emptyset$. If either $S_K=K$ or $\hat K \cap P=\emptyset$, then $\{K,\mu,P\}$ has the rational Bernstein-Markov property.

\end{theorem}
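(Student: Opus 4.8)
The plan is to reduce Theorem~\ref{rbmp} to an application of Proposition~\ref{wtvsrat}, so the whole task becomes: \emph{show that for every measure $\sigma$ on $P$ of mass at most one, the triple $(K,-U_\sigma,\mu)$ has the weighted (polynomial) Bernstein--Markov property}, given that $(K,\mu)$ has the unweighted one and that either $S_K=K$ or $\hat K\cap P=\emptyset$. Since $Q:=-U_\sigma$ is harmonic on a neighborhood of $K$ (as $P\cap K=\emptyset$), it is continuous on $K$; the issue is only to upgrade the unweighted comparison $\|p_k\|_K\le M_k\|p_k\|_{L^2(\mu)}$ to the weighted one $\|w^k p_k\|_K\le M_k'\|w^k p_k\|_{L^2(\mu)}$ with $\limsup (M_k')^{1/k}=1$, where $w=e^{-Q}=\exp(U_\sigma)$.

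The engine for both cases should be the same approximation idea already flagged in the discussion after Proposition~\ref{wtvsrat}: a potential $U_\sigma$ can be approximated by logarithmic potentials of discrete measures $\mu_k=\frac1k\sum_{j=1}^{m_k}\delta(z_j)$ with $z_j$ in (a neighborhood of) $P$, so that $w(z)^k p_k(z)$ becomes, up to a controlled multiplicative error $e^{o(k)}$ uniform on $K$, a genuine rational function $r_k=p_k/q_k\in\mathcal R_k(P')$ for a slightly enlarged compact $P'$ still disjoint from $K$ (or from $\hat K$). The uniform approximation $\|kU_{\mu_k}-kU_\sigma\|_K=o(k)$ is the standard quantitative statement that one can discretize a potential on a compact set disjoint from its support; this converts the desired weighted Bernstein--Markov inequality into the rational Bernstein--Markov inequality~(\ref{rmk}) for $\{K,\mu,P'\}$, up to the $e^{o(k)}$ factors which are harmless for $k$-th root asymptotics. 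So the real content is to verify the \emph{rational} Bernstein--Markov property directly under each of the two hypotheses.

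Under the hypothesis $\hat K\cap P=\emptyset$: here every $q_k$ with zeros in $P$ is zero-free on the polynomially convex hull $\hat K$, so on $\hat K$ one has $1/q_k$ approximable uniformly by polynomials (Runge/Oka--Weil, since $\hat K$ is polynomially convex and $1/q_k$ is holomorphic on a neighborhood of it), and more quantitatively $\|1/q_k\|_{\hat K}$ and the polynomial approximants can be controlled so that $r_k=p_k\cdot(1/q_k)$ is, on $K$, within $e^{o(k)}$ of a polynomial of degree $o(k)+\deg p_k$. Then applying the given unweighted Bernstein--Markov property of $(K,\mu)$ to that polynomial, and tracking that $\|1/q_k\|_K\le e^{o(k)}$ relative to its $L^2(\mu)$-size (using $P\cap\hat K=\emptyset$ again to bound $q_k$ below on $K$ by $e^{-o(k)}$ times $\|q_k\|_{\text{relevant}}$), yields~(\ref{rmk}). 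Under the hypothesis $S_K=K$: one instead exploits that the Shilov boundary being all of $K$ means $\|g\|_K=\|g\|_K$ cannot be concentrated, i.e.\ that for any $f$ holomorphic on a neighborhood of $\hat K$ (in particular $1/q_k$), $\|f\|_K=\|f\|_{\hat K}$, or more precisely one uses a maximum-principle/peak-point argument so that the reciprocals $1/q_k$ of polynomials with zeros in $P\subset \Omega_K$ (the unbounded component) cannot grow faster than $e^{o(k)}$ on $K$ compared to their behavior where $\mu$ lives; this again reduces $r_k$ on $K$ to a polynomial estimate to which the unweighted property of $(K,\mu)$ applies.

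The main obstacle I expect is precisely the step of controlling the denominators $q_k$: namely showing that $\log\|1/q_k\|_K - \log\,(\text{typical } |1/q_k| \text{ on } \supp\mu)$ (equivalently $\log\|q_k\|_K$ versus the values of $|q_k|$ relevant for the $L^2(\mu)$ norm) is $o(k)$, uniformly over all admissible $q_k$ of degree $\le k$ with zeros in $P$. This is where the two hypotheses do genuinely different work: $\hat K\cap P=\emptyset$ gives it by holomorphy of $1/q_k$ across $\hat K$ plus polynomial approximation, while $S_K=K$ gives it because then $K$ carries its own maximum principle and $|q_k|$ cannot dip on a $\mu$-significant set far below $\|q_k\|_K$ without contradicting that $S_K=K$. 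Once this denominator control is in hand, everything else is the routine translation between the rational picture and the weighted-polynomial picture via Proposition~\ref{wtvsrat} and the $e^{o(k)}$ bookkeeping that is invisible at the level of $k$-th roots.
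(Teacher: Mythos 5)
The paper gives no proof of this theorem (it defers to \cite{P}, saying only that the argument goes through Proposition \ref{wtvsrat}), so I am judging your proposal on its own terms. Your overall frame --- reduce to the weighted Bernstein--Markov property for $Q=-U_\sigma$, $\sigma$ on $P$ of mass at most one, via Proposition \ref{wtvsrat} and discretization of potentials --- is the intended route, and your treatment of the case $\hat K\cap P=\emptyset$ has the right engine: $1/q_k$ is holomorphic on a neighborhood of the polynomially convex set $\hat K$, so it admits good polynomial approximation there, and one applies the unweighted Bernstein--Markov inequality to $p_k\cdot(\text{approximant})$. But even here your quantitative claims are wrong as stated: $\|1/q_k\|$ on a fixed neighborhood of $\hat K$ grows like $A^k$ with possibly $A>1$, and the minimum of $|q_k|$ on $K$ can be as small as $(d/D)^k\max_K|q_k|$ (with $d=\mathrm{dist}(K,P)$, $D=\mathrm{diam}(K\cup P)$), so neither the ``degree $o(k)+\deg p_k$'' for the approximant nor the ``$e^{o(k)}$'' denominator control is available. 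The argument still closes, but only because one may take approximants of degree $Lk$ with $L$ a large \emph{fixed} constant, make the additive error $e^{-ck}\|r_k\|_K$, and use that $M_{(L+1)k}^{1/k}\to 1$; these exponential factors must be tracked, not declared $e^{o(k)}$.

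The genuine gap is the case $S_K=K$. You assert that $P\subset\Omega_K$, the unbounded component of $\C\setminus K$; that is false precisely in the situations this case is designed to cover --- if $P$ avoided $\hat K$ you would be in the other case. When $S_K=K$ but $\hat K\cap P\neq\emptyset$ (e.g.\ $K$ the boundary of a square, $P$ its center), $1/q_k$ has poles \emph{inside} $\hat K$, so it is not holomorphic near $\hat K$, no maximum principle over $\hat K$ applies to it, and your claimed $e^{o(k)}$ comparison between $\|1/q_k\|_K$ and its typical size on $\mathrm{supp}\,\mu$ fails outright: for $q_k(z)=(z-c)^k$ with $c$ the center of the square, $\max_K|1/q_k|/\min_K|1/q_k|=(d_{\max}/d_{\min})^k$ grows exponentially. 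Nothing in your sketch explains what $S_K=K$ actually buys. The point of that hypothesis is geometric: $S_K=K$ means $K=\partial\hat K$, so the boundary of every bounded component $D$ of $\C\setminus K$ lies on the outer boundary $\partial\Omega_K$; this is what permits one to sweep (balayage) the part of $\sigma$ sitting in the holes onto $\partial D\subset\partial\Omega_K$ without changing $U_\sigma$ quasi-everywhere on $K$, and thereby to reduce to a configuration accessible from outside $\hat K$, i.e.\ to the mechanism of the other case. Some device of this kind --- exploiting that the external field is a potential and that the pole measure can be relocated across $K$ --- is indispensable here, because the paper's own example ($K'$ the closed disk, $\mu$ arclength on the circle, $Q=|z|^2$) shows that the unweighted Bernstein--Markov property alone never upgrades to a weighted one for a general continuous $Q$; your $S_K=K$ argument uses nothing about $Q=-U_\sigma$ beyond continuity and therefore cannot be correct.
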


For a compact set $K$ satisfying the assumptions of Theorem \ref{rbmp}, it follows from Theorem \ref{suffmd} that if $K$ is regular, the same sufficient mass density condition (\ref{md??}) on  a measure $\mu$ with support $K$ implies that $\{K,\mu,P\}$ has the rational Bernstein-Markov property. However, even if $\hat K \cap P\not = \emptyset$ and $S_K\not = K$, one still has the following result \cite{P}.
\begin{theorem} Let $K\subset \C$ be regular and let $P$ be compact with $\hat P \cap K=\emptyset$. There exist points $w_1,...,w_m\in \C\setminus (K\cup \hat P)$ and positive numbers $R_1<R_2$ such that 
$$K\subset \subset \{|f|<R_1\} \ \hbox{and} \ P \subset \subset \{R_1< |f|<R_2\}$$
where $f(z):=\prod_{j=1}^m (z-w_j)^{-1}$. Given a measure $\mu$ with support $K$, if there exists $T>0$ such that
$$ \lim_{r\to 0^+} C(\{z\in f(K):f_*\mu(B(z,r)) \geq r^T\})=C(f(K)),$$
then $\{K,\mu,P\}$ has the rational Bernstein-Markov property. 
\end{theorem}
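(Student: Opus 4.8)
The plan is to reduce this rational Bernstein–Markov statement on $K$ to a weighted polynomial Bernstein–Markov statement on the image set $f(K)$, using the biholomorphic-on-the-relevant-region change of variables $f(z)=\prod_{j=1}^m(z-w_j)^{-1}$ and then invoking Theorem~\ref{suffmd} and Proposition~\ref{wtvsrat}. First I would verify the geometric setup: since $\hat P\cap K=\emptyset$ and $K$ is regular (hence $\Omega_K=\C\setminus\hat K$ contains $\hat P$ appropriately, or at least $K$ and $\hat P$ can be separated), one can choose finitely many points $w_1,\dots,w_m$ in $\C\setminus(K\cup\hat P)$ so that the rational function $f=\prod(z-w_j)^{-1}$ has $|f|$ strictly smaller on $K$ than on $P$; concretely $|f|<R_1$ on a neighborhood of $K$ and $R_1<|f|<R_2$ on a neighborhood of $P$. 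The existence of such $w_j$ and $R_1<R_2$ is a Runge-type separation argument and is part of the statement, so I would only sketch it: approximate a function that is small near $K$ and of controlled size near $P$ by such products, using that $K\cup P$ is polynomially convex-friendly because $\hat P\cap K=\emptyset$.

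Next I would push the measure forward. Set $\tilde K:=f(K)$ and $\tilde\mu:=f_*\mu$, the pushforward. Because $f$ is holomorphic and injective on a neighborhood of $K$ (choosing the $w_j$ off $K$ and shrinking if necessary; injectivity is the delicate point and may require taking the $w_j$ far away or only claiming $f$ is proper/finite-to-one, then passing to the image carefully), $\tilde K$ is again a compact set and regularity of $K$ transfers to regularity of $\tilde K$ since $V_{\tilde K}\circ f$ and $V_K$ agree up to the harmonic correction $\log|q|$ coming from the denominator of $f$ — indeed $f$ is a rational map and composition with rational maps interacts with the extremal function in a controlled, well-understood way. The hypothesis $\lim_{r\to0^+}C(\{z\in f(K):\tilde\mu(B(z,r))\ge r^T\})=C(f(K))$ is then exactly the mass-density condition (\ref{md??}) for $(\tilde K,\tilde\mu)$, so Theorem~\ref{suffmd} yields that $(\tilde K,\tilde\mu)$ satisfies the (polynomial) Bernstein–Markov property.

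Now I would transfer back. A rational function $r_k=p_k/q_k\in\mathcal R_k(P)$ has all poles in $P$; writing $q_k$ in terms of the factors $(z-w_j)$ is not immediate since the $w_j$ need not be poles of $q_k$, so instead I would argue as in Proposition~\ref{wtvsrat}: $r_k$ is a weighted polynomial $p_k(z)e^{kU_{\sigma}(z)}$ with $\sigma$ a measure of mass $\le1$ on $P$. On $\tilde K$, via $z=f^{-1}(\zeta)$, this becomes a weighted polynomial-in-$\zeta$ estimate with weight $Q=-U_{\sigma}\circ f^{-1}$, which is continuous (indeed harmonic) on $\tilde K$ because $\sigma$ lives on $P$ which maps into the region $\{R_1<|f|<R_2\}$, disjoint from $\tilde K\subset\{|f|<R_1\}$. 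Applying Theorem~\ref{suffmd}'s conclusion together with the equivalence in Proposition~\ref{wtvsrat} (item 2 implies item 1) on the set $\tilde K$ with measure $\tilde\mu$ gives the weighted Bernstein–Markov property for all such $\sigma$, hence the rational Bernstein–Markov property on $\tilde K$; unwinding the change of variables returns (\ref{rmk}) for $\{K,\mu,P\}$, since sup-norms and $L^2$-norms are preserved (the $L^2$ norm by definition of pushforward, the sup-norm because $f$ is a homeomorphism onto its image).

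The main obstacle I expect is the injectivity / properness of $f$ on a neighborhood of $K$ and the precise way regularity and the extremal function transfer under the rational map $f$: one must be sure that $f$ restricted to a neighborhood of $K$ is a biholomorphism onto its image (which constrains how the $w_j$ are chosen — they must be placed so that no two points of $K$ have the same image), and that $V_{f(K)}\circ f + \frac1{\deg}\log|{\text{denominator}}|$ reproduces $V_K$ so that regular $K$ gives regular $f(K)$. Handling the bookkeeping between "poles in $P$" for $r_k$ and the separation provided by the $w_j$ — via the potential-theoretic reformulation of Proposition~\ref{wtvsrat} rather than a literal factorization — is the technical heart that makes the weight $Q=-U_\sigma\circ f^{-1}$ continuous on $f(K)$, and that is where I would spend the most care.
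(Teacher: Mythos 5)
The survey states this theorem without proof (it is quoted from \cite{P}), so there is no in-paper argument to compare against; judged on its own, your reduction has a gap at its central step. You propose to change variables by $\zeta=f(z)$, push $\mu$ forward to $f(K)$, obtain the polynomial Bernstein--Markov property for $(f(K),f_*\mu)$ from Theorem \ref{suffmd}, and then apply it to $r_k$ transported to $f(K)$. But $f=\prod_{j=1}^m(z-w_j)^{-1}$ is a degree-$m$ rational map, so for $m\ge 2$ it is generally not injective on a neighborhood of $K$ (you flag this but offer no fix, and the $w_j$ are chosen to separate $K$ from $P$ by level sets of $|f|$, not to make $f$ univalent). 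More seriously, even where $f$ is locally invertible, neither $p_k\circ f^{-1}$ nor $r_k\circ f^{-1}$ is a polynomial, or a rational function with poles in $f(P)$, in the variable $\zeta$: the inverse branch $z=f^{-1}(\zeta)$ is an algebraic function of degree $m$, so $p_k\circ f^{-1}$ is merely holomorphic near $f(K)$. Hence the sentence ``this becomes a weighted polynomial-in-$\zeta$ estimate with weight $Q=-U_\sigma\circ f^{-1}$'' is unjustified; the Bernstein--Markov property of $(f(K),f_*\mu)$ applies to polynomials in $\zeta$, and under $\zeta=f(z)$ those correspond to rational functions of $z$ with poles at $w_1,\dots,w_m\notin P$ --- so your transfer runs in the wrong direction.

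The missing ingredient is an approximation step linking $\mathcal R_k(P)$ to polynomials in $f$: since $K\subset\subset\{|f|<R_1\}$ while the poles of $r_k$ lie in $P\subset\{|f|>R_1\}$, each $r_k$ is holomorphic on the lemniscatic region $\{|f|<R_1\}$ (modulo the behaviour at $\infty$), and a Walsh-type maximal-convergence argument approximates $r_k$ on $K$ by polynomials of degree $O(k)$ in $f$ with error $e^{-ck}\|r_k\|$; it is to \emph{those} polynomials in $\zeta=f(z)$ that the mass-density theorem on $(f(K),f_*\mu)$ should be applied. This also explains why the hypothesis is phrased in terms of $f_*\mu$ and $C(f(K))$ rather than as a density condition for $\mu$ on $K$: the mass $f_*\mu(B(f(w),r))$ may be carried near preimages of $f(w)$ other than $w$, so the image condition does not pull back to a pointwise condition on $K$, and your plan to quote Theorem \ref{suffmd} upstairs and ``unwind'' cannot work verbatim. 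Your use of Proposition \ref{wtvsrat} to organize the poles as a potential $-U_\sigma$ is a sensible component, and the regularity of $f(K)$ also needs an argument, but without the expansion of $r_k$ in powers of $f$ (or an equivalent Bernstein--Walsh-type estimate for $\mathcal R_k(P)$ on sublevel sets of $|f|$) the proof does not close.
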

\noindent Here $f_*\mu$ is the push-forward of $\mu$ under $f$ on $f(K)$.

In the weighted situation, for $K\subset \C$ compact with $Q$ admissible, $\nu$ a measure on $K$, and $P\cap K=\emptyset$, we say that $\{K,\nu,Q,P\}$ satisfy a {\it weighted rational Bernstein-Markov property} if for all $r_k\in \mathcal R_k(P)$, 
$$||e^{-kQ}r_k||_K \leq M_k ||e^{-kQ}r_k||_{L^2(\nu)} \ \hbox{with} \ \limsup_{k\to \infty} M_k^{1/k} =1.$$ 
Again, one can replace $L^2(\nu)$ by $L^p(\nu)$ for $1<p<\infty$. We give an application of (weighted) rational Bernstein-Markov properties. In \cite{VELD}, the following vector potential problem is discussed: given a $d$-tuple of nonpolar compact sets $K=(K_{1},\ldots,K_{d})$ in $\C$ and given a symmetric positive semidefinite matrix
$$C:=(c_{i,j})_{i,j=1}^{d},$$
the associated unweighted energy of a $d$-tuple of measures $\mu=(\mu_{1},\ldots,\mu_{d})$ is defined as
$$E(\mu):=\sum_{i,j=1}^{d}c_{i,j}I(\mu_{i},\mu_{j}),$$
where $I(\mu_{i},\mu_{j})$ is the mutual energy:
$$I(\mu_{i},\mu_{j})=\int\int \log\frac{1}{|z-t|}d\mu_{i}(z)d\mu_{j}(t).$$
Given a $d$-tuple of admissible weights $Q=(Q_{1},\ldots,Q_{d})$ with $Q_{i}$ defined on $K_{i}$, $i=1,\ldots,d$, the weighted energy of $\mu=(\mu_{1},\ldots,\mu_{d})$ is defined as
$$E_Q(\mu):=E(\mu)+2\sum_{i=1}^{d}\int Q_{i}d\mu_{i}.$$
We minimize $E_Q(\mu)$ over an appropriate class of $d$-tuples of measures $\mu$: fixing $r_1,\ldots,r_d>0$ set 
\begin{equation*}
\mathcal M_r(K):=\{\mu=(\mu_{1},\ldots,\mu_{d}), \quad\mu_i\in \mathcal M_{r_i}(K_i), \ i=1,\ldots,d
\}
\end{equation*} where $\mathcal M_{r_i}(K_i)$ denotes the positive measures on $K_i\subset \C$ of mass $r_i$. In discretizing $E_Q(\mu)$, because some of the $c_{i,j}$ can be negative, one is led to study vector versions of (weighted) rational Bernstein-Markov properties in order to get generalizations of Propositions \ref{znunwtd} and \ref{weightedtd} in this setting. Indeed, to discretize the vector energies $E$ and $E_{Q}$, for each $k=1,2,...$,  take a sequence of ordered tuples  $m_{k}=(m_{1,k},\ldots,m_{d,k})$ of positive integers with 
\begin{equation}\label{correct2}
m_{i,k}\uparrow \infty,\quad i=1,\ldots,d, \quad \hbox{and} \quad \lim_{k\to \infty}  \frac{m_{i,k}}{m_{{j,k}}}=\frac{r_i}{r_j},\quad i,j=1,\ldots,d
\end{equation} 
so that for $k$ large
\begin{equation}\label{correct3}\frac{r_i^2}{m_{i,k}^2}\asymp \frac{r_j^2}{m_{j,k}^2} \asymp \frac{r_ir_j}{m_{i,k}m_{j,k}}.
\end{equation}
For a set of (distinct) points of the form
\begin{equation}\label{Zk}
{\bf Z_{k}}=\cup_{i=1}^{d}\{z_{i,1},\ldots,z_{i,m_{i,k}}\in K_{i}\},
\end{equation}
 let
\begin{equation}\label{vdm}
|{\bf VDM_k}({\bf Z_{k}})|:=\prod_{i=1}^{d}\prod_{l<p}^{m_{i,k}}|z_{i,l}-z_{i,p}|^{c_{i,i}}
\cdot \prod_{i<j}^{d}\prod_{l=1}^{m_{i,k}}\prod_{p=1}^{m_{j,k}}|z_{i,l}-z_{j,p}|^{c_{i,j}}.
\end{equation}
We define a $k-$th order vector diameter with respect to $(m_{1,k},\ldots,m_{d,k})$ and a sequence satisfying (\ref{correct2}) as
\begin{equation}\label{diam}
\delta^{(k)}(K):=\max_{{\bf Z_{k}}}\Big[|{\bf VDM_k}({\bf Z_{k}})|\Big]^{2|r|^{2}/|m_{k}|(|m_{k}|-1)}.
\end{equation}
Here $|r|=r_{1}+\cdots+r_{d},\quad |m_{k}|=m_{1,k}+\cdots+m_{d,k}$. For the weighted case define  
$$|{\bf VDM_k}^Q({\bf Z_{k}})|:=|{\bf VDM_k}({\bf Z_{k}})|\cdot 
\prod_{i=1}^{d}\prod_{l=1}^{m_{i,k}} e^{-\frac{m_{i,k}}{r_i}Q_i(z_{i,l})}$$ 
and the $k-$th order weighted vector diameter
\begin{equation}\label{diam-w}
\delta^{(k)}_Q(K):=\max_{{\bf Z_{k}}}\Big[|{\bf VDM_k}^Q({\bf Z_{k}})|\Big]^{2|r|^{2}/|m_{k}|(|m_{k}|-1)}.
\end{equation}
The factor $|m_{m}|(|m_{k}|-1)/2$ in the exponent of (\ref{diam}) and (\ref{diam-w}) corresponds to the number of factors in the product (\ref{vdm}). This has applications to {\it multiple orthogonal polynomial} ensembles.

For the appropriate Bernstein-Markov properties, fix $0<p<\infty$ and let $\nu=(\nu_1,\ldots,\nu_d)$ be a tuple of measures with $\nu_i$ supported in $K_i$ for $i=1,\ldots,d$. Let 
$${\mathcal R}^i_k=\{r_k=p_k/q_k: p_k, q_k\in \mathcal P_k; \ \hbox{all zeros of} \ q_k \ \hbox{lie in} \ \cup_{j\neq i}K_j\}.$$
Writing $K=(K_1,...,K_d)$, we say $(K,\nu)$ satisfies an {\it $L^p-$rational Bernstein-Markov property} if for $i=1,\ldots,d$,
$$||r_k||_{K_i}\leq M^{(p)}_{k,i} ||r_k||_{L^p(\nu_i)}, \ r_k\in \mathcal R^i_k$$
where $(M^{(p)}_{k,i})^{1/k}\to 1$ as $k\to\infty$; i.e., each $\{K_i,\nu_i,\cup_{j\neq i}K_j\}$ satisfies an $L^p-$rational Bernstein-Markov property. More generally, let $Q=(Q_1,\ldots,Q_d)$ be a $d-$tuple of admissible weights for $K=(K_1,...,K_d)$. We say $(K,\nu,Q)$ satisfies an {\it $L^p-$weighted rational Bernstein-Markov property} if  for $i=1,\ldots,d$,
$$||r_ke^{-kQ_i}||_{K_i}\leq M^{(p)}_{k,i} ||r_ke^{-kQ_i}||_{L^p(\nu_i)}, \ r_k\in \mathcal R^i_k$$
where $(M^{(p)}_{k,i})^{1/k}\to 1$ as $k\to\infty$; i.e., each $\{K_i,\nu_i,Q_i,\cup_{j\neq i}K_j\}$ satisfies an $L^p-$ weighted rational Bernstein-Markov property. These notions are utilized in \cite{VELD} to first prove a version of Proposition \ref{znunwtd} with 
$$Z^Q_k:= \int_{K^{k}}|{\bf VDM_k}^Q({\bf Z_k})|^2d\nu({\bf Z_k})$$
and eventually a large deviation principle.

\section{Extensions in the univariate setting}

The techniques of using a Bernstein-Walsh estimate plus a mass density condition to prove a Bernstein-Markov property as in Proposition \ref{mdprop} can be applied in other situations. We discuss a situation occurring in the univariate setting. The following is in \cite{BLTW}.

We let $K\subset \C$ be a nonpolar compact set and we let $f:K \to \C$ be continuous. Consider the weighted potential theory problem of minimizing 
\begin{equation}\label{def-EQ}
E^Q_f(\mu)=E^Q(\mu):= \int_{K}\int_{K}\log\frac{1}{|x-y||f(x)-f(y)|w(x)w(y)}d\mu(x)d\mu(y)
\end{equation}
over $\mu \in \mathcal M(K)$ where $w=e^{-Q}\geq 0$ is usc on $K$ with
$\{z\in{K}: Q(z)<\infty\}$ nonpolar (compare with the discussion before (\ref{wten})). Indeed, we observe that the energy $E^{Q}(\mu)$ in (\ref{def-EQ}) can be rewritten as
$$E^{Q}(\mu)=I^Q(\mu) + I(f_*\mu)$$
if all terms exist where 
$$I(f_*\mu) =\int_K \int_K \log \frac{1}{|f(x)-f(y)|}d\mu(x) d\mu(y)=\int_{f(K)}\int_{f(K)}\log \frac{1}{|a-b|}df_*\mu(a) df_*\mu(b)$$
is the logarithmic energy of the push-forward $f_*\mu$ of $\mu$ on $f(K)$. Provided there exists one $\mu \in \mathcal M(K)$ with $E^Q(\mu) < \infty$, it is shown in \cite{BLTW} that there then exists a unique minimizing measure $\mu_{K,Q}^f\in \mathcal M(K)$ of (\ref{def-EQ}). Let 
  \begin{equation}\label{fvdm} |{\it fVDM_k^Q}(z_0,...,z_k)|:= \end{equation}
$$|VDM_k(z_0,...,z_k)|\exp \Big(-k[Q(z_0)+\cdots + Q(z_k)]\Big)|VDM_k(f(z_0),...,f(z_k))| $$
where $VDM_k(z_0,...,z_k)=\det [z_j^i]_{i,j=0,...,k}$ as in (\ref{deltak2}). Setting 
  $$\bigl(\delta_k^Q(f)\bigr)(K):=\max_{z_0,...,z_k\in K}  |{\it fVDM_k^Q}(z_0,...,z_k)|^{2/k(k+1)},$$
we have 
$$\bigl(\delta^Q(f)\bigr)(K):=\lim_{k\to \infty} \bigl(\delta_k^Q(f)\bigr)(K)=\exp{(-E^Q(\mu_{K,Q}^f))}.$$

We now assume, for simplicity, that $f:K \to \C$ extends to a nonconstant entire function (weaker hypotheses which suffice for what follows can be found in \cite{BLTW}). A particular example that occurs in the literature is $f(z)=e^z$. Then $$\{z\in{K}: ~f'(z)\neq 0\text{ and }Q(z)<\infty\}$$ is nonpolar; this implies there exists $\mu \in \mathcal M(K)$ with $E^Q(\mu) < \infty$. In this setting, we consider 
$$\mathcal{F}_k:=\{h_k: h_k(z)=p_k(z)q_k(f(z)) \ \hbox{where} \ p_k, q_k\in \mathcal P_k\}.$$
For $K\subset \C$ compact we define an extremal function \begin{equation}\label{wk}
 W_K(z):=\sup\{\frac{1}{k}\log |h_k(z)|: \ h_k\in\mathcal{F}_k \quad\text{and}\quad ||h_k||_K\leq 1\}, \ z\in \C
 \end{equation}
 and with $Q$ as above we define a weighted extremal function \begin{equation}\label{wkq}
 W_{K,Q}(z):=\sup\{\frac{1}{k}\log |h_k(z)|: \ h_k\in\mathcal{F}_k \quad\text{and}\quad ||h_ke^{-kQ}||_K\leq 1\}, \ z\in \C.
 \end{equation}
 Using some logarithmic potential theory (cf., \cite{R}), we can show that $W_{K,Q}^*$ is subharmonic and, if $K$ is regular, then $W_K^*=0$ on $K$. We obtain a Bernstein-Walsh type estimate for functions $h_k\in \mathcal{F}_k$:
\begin{equation}\label{BW}
|h_k(z)|\leq ||h_k||_Ke^{kW_K^*(z)}, \ z\in \C
\end{equation}
as well as a weighted Bernstein-Walsh type estimate:
\begin{equation}\label{BWQ}
|h_k(z)|\leq ||h_ke^{-kQ}||_Ke^{kW_{K,Q}^*(z)}, \ z\in \C.
\end{equation}

 Given a measure $\mu$ on $K$, we say that $(K,\mu,Q)$ satisfies a weighted Bernstein-Markov property for $\mathcal{F}_k$ if for all $h_k\in\mathcal{F}_k$ we have
$$||h_ke^{-kQ}||_K\leq M_k\int_K |h_k(z)|e^{-kQ(z)}d\mu (z)  \ \hbox{with} \ \limsup_{k\to \infty} M_k^{1/k} =1.
$$
If $\mu$ satisfies a weighted Bernstein-Markov inequality for $\mathcal{F}_k$ for all continuous $Q$ on $K$, we say $\mu$ satisfies a strong Bernstein-Markov inequality for $\mathcal{F}_k$. Using our Bernstein-Walsh estimates (\ref{BW}) and (\ref{BWQ}), one obtains the following result, a special case of Theorem 4.13 of \cite{BLTW} (alluded to in our Remark \ref{mdremark}):

\begin{theorem}\label{BM} Suppose that $\mu$ is a Borel measure $\mu$ on the regular compact set $K$ with the property that for all $z\in K$, $K\cap \bar B(z,r)$ is a regular compact set for $r$ sufficiently small. If $\mu$ satisfies the mass density condition: there exists $r_0>0$ and $T>0$ with $\mu(B(z,r))\geq r^T$ for all $z\in K$ and $r<r_0$, then $\mu$ is a strong Bernstein-Markov measure for $\mathcal{F}_k$ on $K$.
\end{theorem}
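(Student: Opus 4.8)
The plan is to follow the template of Proposition \ref{mdprop} and Theorem \ref{suffmd} essentially verbatim, replacing ordinary polynomials $p_k\in\mathcal P_k$ by functions $h_k\in\mathcal F_k$ and replacing the Bernstein-Walsh estimate (\ref{bwin}) by its analogue (\ref{BWQ}) for $\mathcal F_k$. Concretely: fix a continuous weight $Q$ on $K$; since $W_{K,Q}^*$ is subharmonic and, because $K$ is regular, $W_K^*=0$ on $K$, one has by standard semicontinuity/compactness arguments that given $\epsilon>0$ there is $\delta>0$ with $W_{K,Q}^*(z)\le Q(z)+\epsilon$ on a $\delta$-neighborhood of $K$ (here one uses continuity of $Q$ to pass from the boundary values on $K$ to a neighborhood, exactly as regularity of $K$ is used in Proposition \ref{mdprop}). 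Then (\ref{BWQ}) gives, for $z$ within distance $\delta$ of $K$,
$$|h_k(z)e^{-kQ(z)}|\le \|h_ke^{-kQ}\|_K\, e^{k\epsilon},$$
which is the exact analogue of (\ref{sup}).

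Next I would prove the pointwise lower-bound analogue of (\ref{supnorm}): writing $g_k(z):=h_k(z)e^{-kQ(z)}$ (which is \emph{not} holomorphic because of the weight, so one must be slightly careful), pick $w\in K$ with $|g_k(w)|=\|g_k\|_K$. The weight factor $e^{-kQ}$ is handled as follows: $Q$ is continuous on the compact set $K$, hence uniformly continuous, so on a ball $B(w,r_k)$ of radius $r_k\to0$ the oscillation of $kQ$ is at most $k\cdot\omega(r_k)$ where $\omega$ is the modulus of continuity; choosing $r_k=(\delta/3)e^{-k\epsilon}$ as before, one checks $k\,\omega(r_k)\to0$ (indeed one may shrink things so this is $<\log 2$ eventually, or absorb it into the constants). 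Thus it suffices to bound $|h_k(z)-h_k(w)|$ from above on $B(w,r_k)$. Here $h_k$ \emph{is} entire (restriction of an entire function composed with polynomials), so the Cauchy-estimate argument in the proof of Proposition \ref{mdprop} applies: restrict to the complex line through $w$ in the direction $(z-w)/|z-w|$, set $U(t)=h_k(w+at)e^{-k\epsilon\cdot 0}$... more precisely apply (\ref{BW})/(\ref{BWQ}) to control $\|h_ke^{-kQ}\|$ on the polydisc $|t|<\delta$ by $\|h_ke^{-kQ}\|_K e^{k\epsilon}$ times a bounded oscillation factor, then Cauchy's estimate on $|t|<r_k$ gives $\|U'\|\le \frac{3}{2\delta}\|U\|$ and integration yields $|h_k(z)e^{-kQ(z)}-h_k(w)e^{-kQ(w)}|\le\frac12|g_k(w)|$, hence $|g_k(z)|\ge\frac12|g_k(w)|$ on $B(w,r_k)\cap K$. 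Finally, the mass density condition $\mu(B(w,r_k))\ge r_k^T$ gives
$$\int_K|h_k|e^{-kQ}\,d\mu\ \ge\ \int_{K\cap B(w,r_k)}|g_k|\,d\mu\ \ge\ \tfrac12\|g_k\|_K\,(\delta/3)^T e^{-kT\epsilon},$$
so $M_k\le 2(3/\delta)^T e^{kT\epsilon}$, and since $\epsilon>0$ was arbitrary, $\limsup M_k^{1/k}=1$. As this holds for every continuous $Q$, $\mu$ is a strong Bernstein-Markov measure for $\mathcal F_k$.

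The only genuine subtlety — the ``hard part'' — is the interplay between the non-holomorphic weight $e^{-kQ}$ and the Cauchy-estimate/analyticity argument, which forces one to split $g_k=h_k\cdot e^{-kQ}$ into the holomorphic factor $h_k$ (to which Cauchy estimates apply) and the weight (whose oscillation on balls of radius $r_k=(\delta/3)e^{-k\epsilon}$ must be shown to be negligible on the scale $1/k$, using only uniform continuity of $Q$ and possibly shrinking $\delta$). The additional hypothesis ``$K\cap\bar B(z,r)$ regular for small $r$'' enters precisely where it did in Theorem \ref{suffmd} and Remark \ref{mdremark}: it guarantees the local Bernstein-Walsh bound (\ref{BWQ}) degrades by no more than $e^{k\epsilon}$ on a fixed neighborhood of $K$ uniformly, which is what makes the constants $M_k$ subexponential. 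Everything else is a routine transcription of the proofs of Proposition \ref{mdprop} and Theorem \ref{suffmd}, now using the extremal functions $W_K^*$, $W_{K,Q}^*$ and the Bernstein-Walsh estimates (\ref{BW}), (\ref{BWQ}) in place of $V_K$, $V_{K,Q}$ and (\ref{bwin}); for the full details under the weaker hypotheses of \cite{BLTW}, we refer to Theorem 4.13 there.
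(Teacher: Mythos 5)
Your proposal follows the route the paper itself indicates (it gives no proof of Theorem \ref{BM}, deferring to Theorem 4.13 of \cite{BLTW} and to the template of Proposition \ref{mdprop}, Theorem \ref{suffmd} and Proposition \ref{mdprop2}), and the overall architecture --- a Bernstein--Walsh bound on a neighborhood of $K$, Cauchy estimates applied to the entire factor $h_k$, oscillation control on the weight, and the mass density condition on $B(w,r_k)$ --- is the intended one. Two steps, however, need repair. First, your claim that $k\,\omega(r_k)\to 0$ for $r_k=(\delta/3)e^{-k\epsilon}$ is false for a general continuous $Q$: if $\omega(r)\sim(\log(1/r))^{-1/2}$ then $k\,\omega(r_k)\sim\sqrt{k/\epsilon}\to\infty$. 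The correct bookkeeping is the alternative you mention only in passing: compare $e^{-kQ(z)}$ with $e^{-kQ(w)}$ multiplicatively on a ball of \emph{fixed} small radius $\rho$ on which the oscillation of $Q$ is $<\epsilon$, accept the resulting factor $e^{k\epsilon}$, and do not insist on the constant $\tfrac12$ in the analogue of (\ref{supnorm}); a lower bound of the form $\tfrac12 e^{-Ck\epsilon}\|h_ke^{-kQ}\|_K$ on $K\cap B(w,r_k)$ is all that is needed since $\epsilon$ is arbitrary.

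Second, the inequality $W_{K,Q}^*\le Q+\epsilon$ on a $\delta$-neighborhood of $K$ does not follow from regularity of $K$ plus upper semicontinuity alone: a priori $W_{K,Q}^*\le Q$ holds on $K$ only off a polar exceptional set, and it is precisely the hypothesis that $K\cap \bar B(z,r)$ is regular for small $r$ that eliminates that set --- not a hypothesis ``entering where it did in Theorem \ref{suffmd}'', which makes no such assumption (local regularity is the extra hypothesis of Proposition \ref{mdprop2} and Remark \ref{mdremark}). The cleaner, and intended, localization avoids $W_{K,Q}^*$ entirely: with $w$ a point where $\|h_ke^{-kQ}\|_K$ is attained, the oscillation bound on $Q$ gives $\|h_k\|_{K\cap\bar B(w,\rho)}\le |h_k(w)|e^{k\epsilon}$, and one then applies the \emph{unweighted} estimate (\ref{BW}) for the regular compact set $K\cap\bar B(w,\rho)$ to control $h_k$ on a fixed neighborhood of $w$ before running the Cauchy-estimate argument. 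With these two corrections your sketch is the proof of Theorem 4.13 of \cite{BLTW} in this special case.
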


\noindent One can utilize Theorem \ref{BM} to obtain probabilistic results involving equidistribution of random arrays of points as well as a large deviation principle \cite{BLTW}.

\section{Riesz energies}

In this section we work in Euclidean space $\R^d$ for any $d\geq 2$ and we consider the $\alpha-$Riesz kernel
$$W(x-y)=W_{\alpha}(x-y):= \frac{1}{|x-y|^{\alpha}}, \ x,y\in \R^d$$
for a fixed $\alpha \in (0,d]$. The Riesz energy associated to a measure $\mu$ on a set $K\subset \R^d$ is
$$\int_K \int_K \frac{1}{|x-y|^{\alpha}} d\mu(x) d\mu(y)$$
and for $Q:K\to \R$, the weighted Riesz energy is
$$\int_K \int_K \frac{1}{|x-y|^{\alpha}} d\mu(x) d\mu(y)+ 2\int_K Q(x)d\mu(x).$$
Discretizing this weighted energy, for $n\geq 2$, let 
$$VDM_n^Q(x_1,...,x_n):= \exp {\bigl[-\sum_{1\leq i \neq j\leq n}W(x_i-x_j)\bigr]} \cdot \exp{\bigl[-2n\sum_{j=1}^nQ(x_j)\bigr]}$$
$$=\exp {\bigl[-\sum_{1\leq i \neq j\leq n}\frac{1}{|x_i-x_j|^{\alpha}}\bigr]} \cdot \exp{\bigl[-2n\sum_{j=1}^nQ(x_j)\bigr]}.$$

If we fix $n-1$ points $x_2,...,x_n$ and consider
$$y\to  VDM_n^Q(y,x_2,...,x_n), \ y=(y_1,...,y_d)\in \R^d,$$
then this function is of the form 
$$g(x_2,...,x_n)\cdot f_n^Q(y):=g(x_2,...,x_n)\cdot \exp  {\bigl[-\sum_{j=2}^n\frac{1}{|y-x_j|^{\alpha}}-2nQ(y)\bigr]}$$
where 
$$g(x_2,...,x_n)=\exp{\bigl(-2\sum_{j=2}^n Q(x_j)}\bigr)\cdot \exp  {\bigg(-\sum_{2\leq j \neq k\leq n }\frac{1}{|x_k-x_j|^{\alpha}}\bigg)}.$$ 
We will prove a {\it Bernstein-type estimate} in Proposition \ref{derivest} for derivatives of such functions $f_n^Q(y)$. This will be used as a substitute for a Bernstein-Walsh estimate (\ref{bwin}) in proving Proposition \ref{bernmark}. Here we assume $K$ is a smooth, compact $m-$dimensional submanifold of $\R^d$ and $Q$ is a differentiable function of class $C^{1}$ on $K$; i.e., $Q$ is the restriction to $K$ of a differentiable function of class $C^{1}$ on $\R^d$. Fixing such $K$ and $Q$,  we let 
$$\mathcal P_n^Q:=\{f_n^Q(y):=\exp  {\bigl[-\sum_{j=2}^n\frac{1}{|y-x_j|^{\alpha}}-2nQ(y)\bigr]}: x_2,...,x_n\in K\}.$$

\begin{proposition} \label{derivest} There exist $C_n=C_n(K)>0$ with $\lim_{n\to \infty}C_n^{1/n}=1$ such that
$$||\nabla f_n^Q||_K\leq C_n ||f_n^Q||_K, \ \hbox{for all} \ f_n^Q\in \mathcal P_n^Q.$$

\end{proposition}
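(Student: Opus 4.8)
The plan is to estimate $\nabla f_n^Q$ by differentiating the explicit exponential directly. Writing
$$f_n^Q(y)=\exp\Bigl[-\sum_{j=2}^n\frac{1}{|y-x_j|^{\alpha}}-2nQ(y)\Bigr]=e^{-\Phi_n(y)},\qquad \Phi_n(y):=\sum_{j=2}^n\frac{1}{|y-x_j|^{\alpha}}+2nQ(y),$$
we have $\nabla f_n^Q(y)=-(\nabla\Phi_n(y))\,f_n^Q(y)$, so it suffices to bound $\|\nabla\Phi_n\|_K$ by some $C_n$ with $C_n^{1/n}\to1$. Since $Q\in C^1$ on $\R^d$, the term $2n\nabla Q(y)$ contributes $O(n)$ uniformly on $K$, which is harmless. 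The real issue is the sum $\sum_{j=2}^n\nabla_y\,|y-x_j|^{-\alpha}=-\alpha\sum_{j=2}^n\frac{y-x_j}{|y-x_j|^{\alpha+2}}$, whose individual terms blow up as $y$ approaches one of the $x_j$.

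The key device is that a pointwise upper bound on $|\nabla\Phi_n(y)|$ for a \emph{fixed} $y$ need only control those indices $j$ for which $x_j$ is close to $y$, and for each such $j$ the competing factor $e^{-1/|y-x_j|^{\alpha}}$ inside $f_n^Q$ is exponentially small; more directly, one uses that on the manifold $K$ (which is $m$-dimensional and smooth, hence Ahlfors-regular) the number of points $x_j$ lying in a dyadic annulus $2^{-\ell-1}\le|y-x_j|<2^{-\ell}$ around $y$ is at most a constant times $2^{-\ell m}\cdot(\text{diam}\,K)^{?}$ only if the $x_j$ were separated — which they are \emph{not} assumed to be. So instead I would argue as follows: the quantity we must bound is $\|\nabla f_n^Q\|_K$, not $\|\nabla\Phi_n\|_K$ by itself. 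For the index $j$ realizing the minimum distance $\rho:=\min_j|y-x_j|$, the factor $f_n^Q(y)\le e^{-\rho^{-\alpha}}$, while $|\nabla\Phi_n(y)|\le \alpha\sum_j|y-x_j|^{-\alpha-1}+2n\|\nabla Q\|_K$. Group the $x_j$'s by distance to $y$: there is at most a crude bound of $n$ points at any distance, so $\sum_j|y-x_j|^{-\alpha-1}\le n\rho^{-\alpha-1}$, giving
$$|\nabla f_n^Q(y)|\le\bigl(\alpha n\rho^{-\alpha-1}+2n\|\nabla Q\|_K\bigr)e^{-\rho^{-\alpha}}.$$
Now $t\mapsto \alpha n t^{\alpha+1}e^{-t^{\alpha}}$ (with $t=1/\rho$) is bounded over all $t>0$ by something like $n\cdot c_\alpha$; in fact its maximum is $O(n)$ independent of $\rho$. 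So $|\nabla f_n^Q(y)|\le C n\le Cn\,\|f_n^Q\|_K^{0}$ — but this is not yet of the desired form, because we need the bound relative to $\|f_n^Q\|_K$, which could itself be exponentially small.

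To fix this, observe that $\|f_n^Q\|_K\ge f_n^Q(x_n)$ — no wait, $x_n\in K$ but $f_n^Q(x_n)$ has a singular term $|x_n-x_n|^{-\alpha}$; rather pick any point $y_0\in K$ with $\text{dist}(y_0,\{x_2,\dots,x_n\})\ge\delta_n$ for a suitable $\delta_n$: since $K$ is an $m$-manifold with $m\ge1$ it has infinitely many points and positive "size," so such $y_0$ exists with $\delta_n\ge c n^{-1/m}$ by a volume/covering argument (the $n-1$ balls $B(x_j,\delta_n)$ cannot cover $K$ if $(n-1)\delta_n^m$ is smaller than a constant times the volume/$\mathcal{H}^m$-measure of $K$). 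At such $y_0$,
$$\|f_n^Q\|_K\ge f_n^Q(y_0)=\exp\Bigl[-\sum_{j=2}^n|y_0-x_j|^{-\alpha}-2nQ(y_0)\Bigr]\ge\exp\bigl[-n\delta_n^{-\alpha}-2n\|Q\|_K\bigr]=\exp\bigl[-O(n^{1+\alpha/m})\bigr].$$
Hmm — that gives $\|f_n^Q\|_K\ge e^{-Cn^{1+\alpha/m}}$, so $C_n\le n\cdot e^{Cn^{1+\alpha/m}}$, whose $n$-th root does \emph{not} tend to $1$ unless $\alpha<$ something. The main obstacle is exactly this: one needs a sharper choice of $y_0$ or a sharper combinatorial bound on how many $x_j$ can crowd near $y_0$. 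The resolution (and the point of the hypothesis $K$ smooth and $Q\in C^1$) should be that $\sum_{j=2}^n|y_0-x_j|^{-\alpha}$ is not $n\delta_n^{-\alpha}$ but rather $O(n)$: by the manifold structure, for the \emph{best} choice of $y_0$ (a Fekete-type point for the Riesz energy, or just a point minimizing $\sum_j|y-x_j|^{-\alpha}$ over $y\in K$), one has $\min_{y\in K}\sum_{j=2}^n|y-x_j|^{-\alpha}=O(n)$ when $\alpha<m$, because the Riesz-$\alpha$ potential of $n$ unit masses on an $m$-manifold has uniformly bounded (per-mass) infimum in that regime — this is a standard fact from Riesz potential theory on sets of positive capacity. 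Granting $\alpha\in(0,m)$ (or handling $\alpha\ge m$ by a regularization), we then get $\|f_n^Q\|_K\ge e^{-Cn}$ and $\|\nabla f_n^Q\|_K\le Cn$, hence $C_n:=Cn\,e^{Cn}$ works and $\lim_n C_n^{1/n}=1$ fails — still off by $e^C$.

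So the genuinely delicate point, which I would isolate as the crux, is obtaining $\|\nabla f_n^Q\|_K/\|f_n^Q\|_K\le e^{o(n)}$: this requires that at the point $w$ where $|\nabla f_n^Q|$ is maximized, $\|f_n^Q\|_K$ is not much smaller than $|f_n^Q(w)|$, i.e.\ that $f_n^Q$ does not oscillate too wildly — and the quantitative comparison should come from integrating $\nabla\Phi_n$ along a short path in $K$ from $w$ to a nearby point where $\Phi_n$ is near its minimum, combined with the per-mass boundedness of the Riesz potential. Concretely: let $\|\nabla f_n^Q\|_K=|\nabla f_n^Q(w)|$; then by the pointwise computation above $|\nabla f_n^Q(w)|\le Cn\cdot e^{-\Phi_n(w)}$ is false in the bad regime — instead one shows $|\nabla f_n^Q(w)|\le Cn\,\rho(w)^{-\alpha-1}e^{-\rho(w)^{-\alpha}}\cdot e^{-\Phi_n'(w)}$ where $\Phi_n'$ is $\Phi_n$ minus its single closest term, and $e^{-\Phi_n'(w)}\le e^{Cn}\|f_n^Q\|_K$ because $\Phi_n'(w)\ge \min_K\Phi_n-Cn\ge -Cn+\Phi_n(y_0)$ while $e^{-\Phi_n(y_0)}\le\|f_n^Q\|_K$; the single-term factor $\rho^{-\alpha-1}e^{-\rho^{-\alpha}}$ is bounded by an absolute constant times $n$ as before. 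Thus $\|\nabla f_n^Q\|_K\le Cn\,e^{Cn}\|f_n^Q\|_K$, and one takes $C_n=Cne^{Cn}$ — whose root tends to $e^C$, not $1$. I expect the actual argument in \cite{BLTW} replaces the crude "at most $n$ points" and "$\Phi_n'\ge -Cn$" bounds by the sharp statement that for the \emph{extremal} configuration one may assume (by a normal-family/compactness reduction) the points are the discrete minimizers, for which $\Phi_n' = o(n)$ and the number of near-collisions is $o(n)$, yielding $C_n=e^{o(n)}$. The main obstacle, then, is precisely controlling the self-interaction term $\Phi_n'(w)$ — showing it exceeds $\min_K\Phi_n$ by only $o(n)$ — which is where the smoothness of $K$, the $C^1$ regularity of $Q$, and the Riesz potential theory of \cite{BLTW} must be invoked in earnest.
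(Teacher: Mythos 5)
Your computation of the gradient and the reduction to bounding $\|\nabla f_n^Q\|_K/\|f_n^Q\|_K$ match the paper's starting point, but the proposal never closes: you end with a constant whose $n$-th root tends to $e^C\neq 1$ and defer the crux to the reference, speculating that one needs Riesz potential theory on $m$-manifolds, a restriction $\alpha<m$, or extremal (Fekete-type) configurations with $o(n)$ self-interaction. None of that is needed, and the diagnosis points in the wrong direction. The missing idea is a two-regime split at a threshold $n^{-\delta}$ (with $\delta>1$ to be tuned), together with the observation that one does \emph{not} need $\|f_n^Q\|_K\geq e^{-O(n)}$. A crude volume argument (the $n-1$ balls $B(x_j,n^{-\beta})$ with $\beta>1$ cannot cover $K$ for large $n$) gives a point $y_0\in K$ with $|y_0-x_j|\geq n^{-\beta}$ for all $j$, hence the superexponentially small but \emph{quantified} lower bound $\|f_n^Q\|_K\geq e^{-n^{\alpha\beta+1}}e^{-2n\max_KQ}$. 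This is exactly the bound you computed (with $\beta$ in place of $1/m$) and then discarded as useless; it is in fact enough.

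The reason it is enough is that the bad regime is self-defeating. If $y$ is at distance $\geq n^{-\delta}$ from every $x_j$, then $\sum_j\alpha|y-x_j|^{-\alpha-1}+2nB\leq \alpha(n-1)n^{\delta(\alpha+1)}+2nB$ is polynomial in $n$, and since $|\nabla f_n^Q(y)|\leq f_n^Q(y)\bigl(\sum_j\alpha|y-x_j|^{-\alpha-1}+2nB\bigr)$ with $f_n^Q(y)\leq\|f_n^Q\|_K$, you are done there with a polynomial constant. If instead $\rho:=\min_j|y-x_j|<n^{-\delta}$, then the factor $e^{-\rho^{-\alpha}}$ in $f_n^Q(y)$ forces, after the elementary observation that $v\mapsto(\alpha v+2B)e^{-v}$ is decreasing for $v>1$, the bound $|\nabla f_n^Q(y)|\leq n\,e^{-2n\min_KQ}(\alpha n^{\alpha\delta}+2B)e^{-n^{\alpha\delta}}$. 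Dividing by the lower bound on $\|f_n^Q\|_K$ yields a factor $e^{n^{\alpha\beta+1}-n^{\alpha\delta}}$ times $e^{O(n)}$ times a polynomial, which tends to $0$ once $\alpha(\delta-\beta)>1$. So the near-collision regime contributes nothing in the limit, and $C_n$ can be taken polynomial in $n$, giving $C_n^{1/n}\to1$. Your instinct that "$f_n^Q$ must not oscillate too wildly" is right, but the mechanism is not a sharp Riesz-energy bound on $\Phi_n$; it is simply that wherever the gradient could be large because of a nearby singularity, the Gaussian-type factor $e^{-\rho^{-\alpha}}$ with $\rho^{-\alpha}\geq n^{\alpha\delta}\gg n^{\alpha\beta+1}$ crushes the function (and its gradient) far below $\|f_n^Q\|_K$.
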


\begin{proof} Fix $f_n^Q\in \mathcal P_n^Q$. It suffices to estimate 
$\partial f_n^Q/\partial y_1$.
%for $y$ different from $x_{2},\ldots,x_{n}$. 
Since 
$$|y-x_j|^{\alpha}=\bigg(\sum_{k=1}^d [y_k - (x_j)_k]^2\bigg)^{\alpha/2},$$
we compute
$$ \frac{\partial f_n^Q}{\partial y_1}(y)= 
f_n^Q(y)\bigg(\alpha\sum_{j=2}^n \frac{y_1-(x_j)_1}{|y-x_j|^{\alpha+2}}-2n\frac{\partial Q}{\partial y_1}\bigg).$$
Let $B:=\sup_K |\nabla Q|$. Then on $K$, 
\begin{equation}
\label{grad} \left|\frac{\partial f_n^Q}{\partial y_1}(y)\right|\leq  
f_n^Q(y)\bigg(\sum_{j=2}^n \frac{\alpha}{|y-x_j|^{\alpha+1}}+2nB \bigg).
\end{equation}

We fix $\delta \geq 1$ and we break up the estimate of $|\partial f_n^Q/\partial y_1|$ into two cases:
\vskip4pt

\noindent{\it Case I}: $|y-x_j|\geq n^{-\delta}, \ j=2,...,n$.

\vskip4pt

In this case, we immediately obtain
$$\left|\frac{\partial f_n^Q}{\partial y_1}(y)\right|\leq 
\bigl(\alpha(n-1)\cdot n^{\delta(\alpha+1)}+2nB\bigr)  f_n^Q(y).$$

\vskip4pt

\noindent{\it Case II}: $|y-x_j|< n^{-\delta}$ for some $j\in \{2,...,n\}$.

\vskip4pt

We begin with a lower bound on $\|f_n^Q\|_{K}$ on $K$. Given $\beta >1$, we claim that for $n=n(\beta)$ sufficiently large, 
\begin{equation}\label{lower} ||f_n^Q||_K\geq e^{-n^{\alpha\beta+1}} e^{-2nC}\end{equation}
where $C=\max_KQ$. Since the volume of $(n-1)$ balls of radius $n^{-\beta}$ in $\R^{d}$ tends to zero as $n\to\infty$, we can find, for $n$ large, a point $y\in K$ with 
$$|y-x_j|\geq n^{-\beta} \ \hbox{for} \ j=2,...,n.$$
Thus 
$$-|y-x_j|^{-\alpha}\geq -n^{\alpha \beta}$$
so that
$$|f_n^Q(y)|\geq e^{-(n-1)n^{\alpha\beta}-2nC}\geq 
%\bigl(e^{-n^{\alpha\beta}}\bigr)^{n}\cdot e^{-2nC}=
e^{-n^{\alpha\beta+1}} e^{-2nC}.$$

Suppose $|y-x_2|=\min_j |y-x_j|< n^{-\delta}$. Since $ \exp (-|y-x_j|^{-\alpha})\leq 1$, we have the trivial estimate
$$|f_n^Q(y)|\leq  \exp(-|y-x_2|^{-\alpha}-2nc),$$
where $c=\min_{K}Q$.
Inserting this in (\ref{grad}) and using $|y-x_j|\geq |y-x_2|$, we have
$$\left|\frac{\partial f_n^Q}{\partial y_1}(y)\right|\leq  
\exp(|y-x_2|^{-\alpha}-2nc)\bigg(\sum_{j=2}^n \frac{\alpha}{|y-x_j|^{\alpha+1}}+2nB \bigg)
$$
$$
\leq n e^{-2nc}\cdot \exp{\bigg(-\frac{1}{|y-x_2|^{\alpha}}\bigg)}\cdot \bigg(\frac{\alpha}{|y-x_2|^{\alpha}}+2B\bigg). $$
This upper bound is of the form $n e^{-2nc}g(v)$ where
$$g(v)= (\alpha v+2B)e^{-v}$$
and $v=|y-x_2|^{-\alpha}$. A calculation shows that $g$ is decreasing if $v>1$. 
%In particular, $g$ is decreasing for
%$$v=|y-x_2|< \frac{1}{n^{\delta}}.$$
Thus the maximum value of $|(\partial f_n^Q/\partial y_1)(y)|$ for 
$|y-x_2|\leq n^{-\delta}$ occurs 
when $|y-x_2|= n^{-\delta}$ which gives 
$$\left|\frac{\partial f_n^Q}{\partial y_1}(y)\right|\leq 
n e^{-2nc}(\alpha n^{\alpha \delta} +2B)e^{-n^{\alpha \delta}}.$$
From (\ref{lower}) we obtain
$$
\left|\frac{\partial f_n^Q}{\partial y_1}(y)\right|\leq \|f_n^Q\|_K e^{n^{\alpha\beta+1}} e^{2nC} n e^{-2nc}(\alpha n^{\alpha \delta} +2B)e^{-n^{\alpha \delta}}.$$
Choosing $\beta>1$ and $\delta>1$ so that $\alpha (\delta - \beta) >1$ gives the result.

\end{proof}

Now let $\mu$ be a positive measure on $K$ of finite total mass. Using the previous estimate, we can give a sufficient mass density condition for an appropriate Bernstein-Markov type property on the classes $\mathcal P_n^Q$ in the spirit of Proposition \ref{mdprop}. This can be used in proving probabilistic results involving equidistribution of random arrays of points in this setting.

\begin{proposition}\label{bernmark} Suppose there exist constants $r_0>0$ and $T>0$ such that 
$$\mu(B(x,r))\geq r^T$$
for all $x\in K$ and $r<r_0$. Then 
$$||f_n^Q||_K \leq M_n \int_K f_n^Q(x)d\mu(x) \ \hbox{for all} \ f_n^Q \in \mathcal P_n^Q$$
where $M_n^{1/n}\to 1$.
\end{proposition}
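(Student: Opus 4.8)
The plan is to mimic the proof of Proposition \ref{mdprop}, using Proposition \ref{derivest} as the substitute for the Bernstein--Walsh estimate. Fix $\epsilon>0$ and $f_n^Q\in\mathcal P_n^Q$; pick $w\in K$ with $f_n^Q(w)=\|f_n^Q\|_K$ (note $f_n^Q>0$, so no absolute values are needed). The goal is to show that $f_n^Q$ stays comparable to its maximum on a small ball around $w$ inside $K$, and then integrate against $\mu$ using the mass density hypothesis, exactly as in Proposition \ref{mdprop}. The difference is that in place of the bound $|p_k(z)|\le\|p_k\|_K e^{k\epsilon}$ coming from regularity of $K$ and the Bernstein--Walsh inequality, here I only have a gradient estimate $\|\nabla f_n^Q\|_K\le C_n\|f_n^Q\|_K$ with $C_n^{1/n}\to 1$, valid on $K$ itself. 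So the ``radius'' on which I can guarantee near-constancy is $r_n\sim \tfrac{1}{2}C_n^{-1}$ rather than a fixed $\delta e^{-k\epsilon}$.

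The key steps, in order: (1) By Proposition \ref{derivest}, choose $r_n:=\tfrac{1}{4C_n}$ so that $\sup_{x\in K}|\nabla f_n^Q(x)|\cdot 2r_n\le \tfrac{1}{2}\|f_n^Q\|_K$. Note $r_n^{1/n}\to 1$ since $C_n^{1/n}\to 1$. (2) For $x\in K$ with $|x-w|<r_n$, parametrize the segment from $w$ to $x$ and use the fundamental theorem of calculus along it: $|f_n^Q(x)-f_n^Q(w)|\le |x-w|\cdot\sup_K|\nabla f_n^Q|\le r_n C_n\|f_n^Q\|_K\le\tfrac14\|f_n^Q\|_K$; actually the segment need not lie in $K$, so instead bound $f_n^Q(x)-f_n^Q(w)$ directly by integrating $\nabla f_n^Q$ along the straight segment in $\R^d$ — this is fine because $f_n^Q$ is defined and smooth on all of $\R^d$ away from the poles $x_j$, and for the Bernstein estimate one really wants the bound $\|\nabla f_n^Q\|_{\R^d}$ or at least in a neighborhood of $K$; I would simply note (as in Proposition \ref{derivest}'s proof, whose computation is pointwise on $\R^d$ minus the $x_j$) that the same gradient bound holds in a fixed tubular neighborhood of $K$, or alternatively invoke that $K$ is a smooth submanifold so geodesic segments of length $<r_n$ stay in $K$ and are comparable to chords. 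Either way we obtain $f_n^Q(x)\ge \tfrac12\|f_n^Q\|_K$ for $x\in K\cap B(w,r_n)$. (3) Apply the mass density condition: for $n$ large enough that $r_n<r_0$, $\mu(B(w,r_n))\ge r_n^T$, so
$$\int_K f_n^Q\,d\mu\ \ge\ \int_{K\cap B(w,r_n)}f_n^Q\,d\mu\ \ge\ \tfrac12\|f_n^Q\|_K\cdot r_n^T,$$
giving $\|f_n^Q\|_K\le M_n\int_K f_n^Q\,d\mu$ with $M_n:=2r_n^{-T}$. (4) Conclude $M_n^{1/n}=2^{1/n}r_n^{-T/n}\to 1$ because $r_n^{1/n}\to 1$.

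The main obstacle — the one point genuinely needing care rather than routine bookkeeping — is step (2): the straight segment joining $w$ and a nearby point $x\in K$ need not lie in $K$, so to estimate $f_n^Q(x)-f_n^Q(w)$ by integrating $\nabla f_n^Q$ I must control $|\nabla f_n^Q|$ on a full neighborhood of $K$ in $\R^d$, not just on $K$. The fix is to observe that the pointwise computation in the proof of Proposition \ref{derivest} is really carried out for $y$ ranging over $\R^d$ minus the poles and only uses $B=\sup|\nabla Q|$ and the distances $|y-x_j|$; since $Q$ extends to a $C^1$ function on $\R^d$, the identical argument yields $\|\nabla f_n^Q\|_{N}\le C_n'\|f_n^Q\|_N$ with $(C_n')^{1/n}\to1$ on a fixed tubular neighborhood $N$ of $K$ (shrinking the threshold exponent's constants if necessary), where $\|\cdot\|_N$ denotes sup over $N$; and $\|f_n^Q\|_N$ is comparable to $\|f_n^Q\|_K$ up to a factor whose $n$-th root tends to $1$, by the same near-constancy argument applied on $N$. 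Alternatively, and more cleanly, since $K$ is a compact smooth submanifold, for $r$ small any two points of $K$ at distance $<r$ are joined by a path in $K$ of length $\le (1+o(1))r$; integrating $\nabla f_n^Q$ (tangential derivative, so $\|\nabla f_n^Q\|_K$ suffices) along that path gives the estimate directly. I would present the argument via this submanifold-path version to keep everything on $K$. Everything else is a verbatim repetition of the Proposition \ref{mdprop} endgame.
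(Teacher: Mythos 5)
Your proposal is correct and follows essentially the same route as the paper: the paper also picks the maximizing point, integrates $\nabla f_n^Q$ along a smooth arclength-parameterized curve \emph{in} $K$ (using that $K$ is a compact smooth submanifold to get a path of length comparable to $|x-w|$, exactly your ``submanifold-path version''), deduces $f_n^Q\ge\frac12\|f_n^Q\|_K$ on $K\cap B(w,r_n)$, and applies the mass density bound. The only cosmetic difference is the choice of radius ($e^{-\epsilon n}$ with a final $\epsilon$-argument in the paper versus your $r_n\sim C_n^{-1}$), which is immaterial.
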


\begin{proof} Given $ f_n^Q \in \mathcal P_n^Q$, choose $p\in K$ with $||f_n^Q||_K=f_n^Q(p)$. Given $x\in K$ let $\gamma:[0,L]\to K$ be a smooth curve in $K$ parameterized by arclength joining $x=\gamma(0)$ to $p=\gamma(L)$. Then
$$f_n^Q(p)-f_n^Q(x)=\int_0^L \nabla f_n^Q(\gamma(t))\cdot \gamma'(t) dt.$$
Hence
$$f_n^Q(p)-f_n^Q(x)\leq L \cdot ||\nabla f_n^Q||_K.$$
From Proposition \ref{derivest}, 
$$f_n^Q(p)-f_n^Q(x)\leq L C_n ||f_n^Q||_K=LC_n f_n^Q(p)$$
so that 
$$f_n^Q(x)\geq f_n^Q(p)[1- C_nL].$$
Given $\epsilon >0$, we take $x\in B(p,e^{-\epsilon n})$. Since $K$ is a smooth submanifold of $\R^{d}$, for all such $x$, there is a constant $C$ depending on $K$ such that for $n$ sufficiently large, there exists a smooth curve $\gamma$ as above with $L\leq Ce^{-\epsilon n}$. Choosing such $x$ and $\gamma$, for $n$ large we have
$$f_n^Q(x) \geq \frac{1}{2} f_n^Q(p)$$
and $e^{-\epsilon n}<r_0$. Then
$$\int_K f_n^Q(x)d\mu(x)\geq \int_{B(p,e^{-\epsilon n})} f_n^Q(x)d\mu(x)\geq \frac{1}{2} f_n^Q(p) \mu (B(p,e^{-\epsilon n}))\geq \frac{1}{2}\|f_n^Q\|_K e^{-\epsilon n T}.$$

\end{proof}

\section{Bernstein-Markov on algebraic varieties}

Returning to pluripotential theory, if $K\subset \C^n$ is pluripolar, then $V_K^*\equiv +\infty$; indeed, $V_K=+\infty$ on $\C^n \setminus Z$ where $Z$ is pluripolar. This situation can still be interesting if $Z$ has some structure. In this vein we mention the following beautiful result of Sadullaev \cite{Sad}. 
\begin{theorem} Let $A$ be a pure $m-$dimensional, irreducible analytic subvariety of $\C^n$ where $1\leq m \leq n-1$. Then $A$ is algebraic if and only if for some (all) $K\subset A$ compact and nonpluripolar in $A$, $V_K$ in (\ref{veekpoly}) is locally bounded on $A$.

\end{theorem}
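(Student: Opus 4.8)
The plan is to prove both implications of Sadullaev's theorem, treating the directions quite differently in flavor.

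\textbf{The easy direction ($A$ algebraic $\Rightarrow$ $V_K$ locally bounded on $A$).} Suppose $A$ is an algebraic subvariety of $\C^n$ of pure dimension $m$. First I would reduce to a convenient model: after a generic linear change of coordinates, write $\C^n = \C^m_{z'} \times \C^{n-m}_{z''}$ so that the projection $\pi: A \to \C^m_{z'}$, $\pi(z',z'') = z'$, is a proper branched covering of some finite degree $p$. Properness and algebraicity give that the coordinate functions $z''_1,\dots,z''_{n-m}$ restricted to $A$ satisfy monic polynomial equations $z''_j{}^{p} + a_{j,1}(z') z''_j{}^{p-1} + \cdots + a_{j,p}(z') = 0$ with polynomial coefficients $a_{j,\ell}$. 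Consequently, for $z = (z',z'') \in A$ one has a growth bound $|z''| \leq C(1+|z'|)^{d}$ for suitable constants, so $A$ meets each polydisk only in a set that projects into a polydisk of comparable size. Now take $K \subset A$ compact and nonpluripolar in $A$; fix a polydisk $D'$ in $\C^m$ with $\pi(K) \subset D'$. Given a polynomial $q$ on $\C^n$ with $\|q\|_K \leq 1$, restrict to $A$ and push down: for fixed $z' \in D'$ the fiber $\pi^{-1}(z') \cap A$ consists of at most $p$ points, and by taking the product of $q$ over the sheets (a standard resultant/symmetric-function construction) one obtains a polynomial $\tilde q(z')$ on $\C^m$ whose degree is controlled linearly by $\deg q$ and whose sup-norm on the compact $\pi(K)$ is $\leq 1$. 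One then controls $|q(z',z'')|$ for $z \in A$ over a slightly larger polydisk by the classical (polynomial) Bernstein-Walsh estimate (\ref{bwin}) in $\C^m$ applied to $\tilde q$ and to the elementary symmetric functions, extracting the individual sheet value via the explicit roots-from-coefficients bound. Taking $\frac{1}{\deg q}\log$ and the supremum over all admissible $q$ yields that $V_K$ is bounded above on $A \cap D$ for any polydisk $D$, i.e. $V_K$ is locally bounded on $A$.

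\textbf{The hard direction ($V_K$ locally bounded on $A$ $\Rightarrow$ $A$ algebraic).} This is the substantive part. Suppose $A$ is pure $m$-dimensional, irreducible, and that $V_K$ is locally bounded on $A$ for some compact nonpluripolar $K \subset A$. The goal is to show that the closure $\overline{A}$ in $\Pmath^n$ is an analytic subvariety of $\Pmath^n$; by Chow's theorem it is then algebraic, hence $A$ is algebraic. The strategy is to produce, along each coordinate direction transverse to $A$, a bound of polynomial growth: concretely, I would show that after a generic choice of coordinates the projection $\pi: A \to \C^m$ is proper with a \emph{uniform} degree bound, using the local boundedness of $V_K$ as a growth constraint. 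The key technical input is the Bernstein-Walsh inequality (\ref{bwin}) run in reverse: for any polynomial $q$ we have $|q(z)| \leq \|q\|_K e^{\deg(q) V_K(z)}$ for $z$ on $A$, so local boundedness of $V_K$ means polynomials on $A$ grow at most like $e^{o(\deg q)}$ on each compact piece, but what we actually need is growth \emph{in the ambient variable}. I would therefore combine this with the structure of $L(\C^n)$ from (\ref{vkno}): on the part of $A$ going to infinity, consider the coordinate functions $z_i$ themselves as members of $L(\C^n)$-type competitors. The crucial estimate is to show $\sup\{|z_i|^p : z \in A, |z'| \leq R\}$ grows polynomially in $R$; one gets this by a normal families / monodromy argument showing the number of sheets of $\pi$ is uniformly bounded (otherwise one could build a psh function on $A$ witnessing pluripolarity or violating the local boundedness of $V_K$ by a logarithmic-pole construction along an infinite fiber).

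\textbf{Main obstacle.} The genuine difficulty is the converse direction, specifically establishing that local boundedness of $V_K$ forces the projection $A \to \C^m$ to have globally bounded sheet number and hence polynomially bounded growth of the transverse coordinates. The naive attempt — directly reading off growth from $|q| \leq \|q\|_K e^{\deg q \, V_K}$ — only controls polynomials that are already known to be globally defined, and does not by itself rule out, say, a transcendental graph whose intersection with large balls is controlled on each fixed compact set but whose "degree at infinity" diverges. The resolution I would pursue is Sadullaev's own device: use the local boundedness of $V_K$ to show that $V_{K,A}$, the relative extremal function computed \emph{within} $A$, is comparable to the restriction $V_{K}|_A$, and then invoke a Liouville-type / maximum-principle argument on $A$ together with the classification of the behavior of $V_K$ near the "infinite locus" $\overline{A} \setminus A$. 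Showing that this infinite locus is analytic (so that one can apply Remmert--Stein extension to conclude $\overline A$ is analytic in $\Pmath^n$, then Chow) is where the real work lies, and I would expect to lean on the fact that a psh function with logarithmic growth that is bounded on a neighborhood of a would-be transcendental end leads to a contradiction with nonpluripolarity of $K$ in $A$. Everything else — the symmetric-function pushforwards, the Cauchy-estimate details, the reduction to a good coordinate system — is routine by comparison.
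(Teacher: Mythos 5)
First, a point of reference: the paper does not prove this theorem at all --- it is quoted verbatim from Sadullaev \cite{Sad} --- so your proposal has to stand on its own, and as written it does not. The most concrete problem is in the direction you call easy. Your pushforward step is circular: if $\pi:A\to\C^m_{z'}$ is a proper $p$-sheeted branched cover and $\tilde q(z')=\prod_{j}q(z',z''_j(z'))$ is the product of $q$ over the sheets, then knowing $\|q\|_K\le 1$ gives you no control of $\|\tilde q\|_{\pi(K)}$, because the fiber $\pi^{-1}(z')$ over a point of $\pi(K)$ generally contains points of $A$ that are \emph{not} in $K$, and bounding $|q|$ at those points is exactly what you are trying to prove. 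The standard repair is to abandon the pushforward of polynomials and instead work with the intrinsic Lelong class $L(A)$ on the variety: Rudin's normal form $|z''|\le C(1+|z'|)^s$ (valid precisely because $A$ is algebraic) shows that $\frac{1}{\deg q}\log|q|\big|_A$ has logarithmic growth on $A$, one defines the intrinsic extremal function $V_K^A$ with competitors in $L(A)$ (so $V_K|_A\le V_K^A$ automatically), and one proves $V_K^A$ is locally bounded when $K$ is nonpluripolar in $A$ by a Josefson-type dichotomy. The max-over-sheets pushdown is legitimately used there, but only to compare suprema over full cylinders $\pi^{-1}(B')\cap A$, where the multi-sheet ambiguity disappears; that is the step your sketch is missing.

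The converse direction, which you correctly identify as the substantive one, is not proved in your proposal: phrases such as ``a normal families / monodromy argument showing the number of sheets is uniformly bounded'' and ``a logarithmic-pole construction along an infinite fiber'' name the desired conclusion rather than supply an argument, and you acknowledge as much. Two further technical points would need attention even in a completed write-up: Remmert--Stein does not apply directly to extend $A$ across the hyperplane at infinity, since that hyperplane has dimension $n-1\ge m$ (one needs either Bishop's volume criterion or first to trap $\overline A\cap H_\infty$ inside an algebraic set of dimension $m-1$); and the ``for some (all)'' clause of the statement --- that local boundedness of $V_K$ for one nonpluripolar compact implies it for every such compact --- is a separate assertion that your outline never addresses. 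In short, the proposal is a reasonable plan of attack with the right landmarks (Rudin position, symmetric functions, Chow), but both implications contain genuine gaps, and the first one as written would fail.
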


\noindent Here ``$K$ nonpluripolar in $A$'' means that $K\cap A^{reg}$ is nonpluripolar as a subset of the complex manifold $A^{reg}$ consisting of the regular points of the variety $A$. Note that $A$ -- and hence $K$ -- is pluripolar in $\C^n$ so $V_K^*\equiv \infty$; moreover, $V_K=\infty$ on $\C^n\setminus A$.

A trivial but illustrative example is to take $A=\{(z,w)\in \C^2: w=0\}$ so that $A$ can be identified with the complex $z-$plane in $\C^2$. For any compact set $K\subset A$, by the definition of $V_K$ in (\ref{veekpoly}) we have 
$V_K|_A$ is the univariate extremal function and hence $V_K|_A$ is locally bounded on $A$ precisely when $K$ is polar in $A$. Because of Sadullaev's theorem, for $K\subset A$ compact and nonpluripolar in an algebraic subvariety of $\C^n$, one can define and discuss polynomial inequalities and Bernstein-Markov properties in this setting; one simply considers the polynomials restricted to $A$. In \cite{P2}, a sufficient mass density condition on a measure supported on a nonpluripolar subset $K$ of an algebraic variety $A\subset \C^n$ to satisfy a Bernstein-Markov property in the spirit of Theorem \ref{suffmd} is proved.

An example where the situation of this section arises can be found in \cite{LPLD}. Let $T$ denote the inverse stereographic projection $T:\C \cup \{\infty\}\to {\bf S}$ where ${\bf S}$ is the sphere in $\R^3$ centered in $(0,0,1/2)$ of radius $1/2$:
\begin{equation} \label{stereo} T(z)=v:=(v_1,v_2,v_3)=\left(\frac{\Re(z)}{1+|z|^{2}},\frac{\Im(z)}{1+|z|^{2}},\frac{|z|^{2}}{1+|z|^{2}}\right),\quad z\in\C\end{equation}
and $T(\infty)=P_0,$ where $P_0=(0,0,1)\in {\bf S}$. The set ${\bf S}$ itself is a nonpluripolar subset of the algebraic variety
$$A=\{(w_1,w_2,w_3)\in \C^3: w_1^2 +w_2^2+(w_3-1/2)^2=1\}\subset \C^3.$$
If $K\subset \C$ is a closed and {\it unbounded} set, then $T(K)$ is a compact subset of ${\bf S}$; and polynomials $p\in \mathcal P_k$ in $\C$ correspond to functions of the form $q(v)=\prod_{j=1}^{k}|v-a_j|$, 
$v, a_j\in {\bf S}$. Thus, to verify Bernstein-Markov properties for polynomials on certain unbounded sets in $\C$, such as $\R$, one can translate the problem to a compact setting on ${\bf S}$.

\section{(Pluri-)subharmonic Bernstein-Markov property}

Recall from section 3 that $\mu$ is a {\it determining measure} for $K$ if for all carriers $E$ of $\mu$, 
$V_E^*=V_K^*$. In \cite{BBN}, Berman, Boucksom and Nystrom give an interesting characterization of  determining measures $\mu$ for $K$ which satisfy the additional hypothesis that $\mu(Z)=0$ if $Z$ is pluripolar.

\begin{proposition} \label{bbn} Let $K\subset\C^n$ be compact and regular. Let $\mu$ be a probability measure on $K$ which puts no mass on pluripolar sets. Then $\mu$ is determining for $K$ if and only if $(K,\mu)$ satisfies a Bernstein-Markov property for psh functions; i.e., for all $\epsilon >0$, there exists $C=C(\epsilon,K)$ such that for all $p\geq 1$,
\begin{equation} \label{pshbm} \sup_K e^{pu}=(\sup_K e^{u})^p \leq C(1+\epsilon)^p ||e^u||^p_{L^p(\mu)}\end{equation}
for all $u\in L(\C^n)$. 
\end{proposition}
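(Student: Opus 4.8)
The plan is to prove both directions of the equivalence, treating the ``only if'' direction (determining $\Rightarrow$ psh Bernstein--Markov) as the substantive one and the ``if'' direction as a relatively soft carrier argument.

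\medskip

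\textbf{The ``if'' direction.} Suppose $(K,\mu)$ satisfies the psh Bernstein--Markov property \eqref{pshbm}. Let $E$ be a carrier of $\mu$, so $\mu(E)=\mu(K)$; since $\mu$ puts no mass on pluripolar sets, we may as well assume $E$ is non-pluripolar and, by inner regularity, that $E$ is a countable union of compact sets, so $V_E^*$ is psh. The inequality $V_E^*\leq V_K^*=V_K$ is automatic. For the reverse, I would take any $u\in L(\C^n)$ with $u\leq 0$ on $E$ (so $\|e^u\|_{L^p(\mu)}\leq \mu(K)^{1/p}$, because $e^u\leq 1$ $\mu$-a.e.), apply \eqref{pshbm} to get $\sup_K e^u\leq C^{1/p}(1+\epsilon)\mu(K)^{1/p}$, let $p\to\infty$ to get $\sup_K u\leq \log(1+\epsilon)$, and then let $\epsilon\to 0$ to conclude $u\leq 0$ on $K$. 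Taking the supremum over such $u$ gives $V_K\leq V_E^*$ on $K$, hence everywhere by the maximum principle for the Siciak--Zaharjuta functions; thus $V_E^*=V_K^*$ and $\mu$ is determining. The one technical point to handle carefully is the reduction from an arbitrary Borel carrier to one for which $V_E^*$ is psh (not identically $+\infty$), which is where the hypothesis ``$\mu$ charges no pluripolar set'' is used.

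\medskip

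\textbf{The ``only if'' direction.} Assume $\mu$ is determining. Fix $\epsilon>0$. The key is a quantitative upgrade of the determining property, analogous to the statement used in the proof of Theorem \ref{suffmd}: because $\mu$ is determining and charges no pluripolar set, for every $\eta>0$ there is $\delta>0$ such that every Borel subset $E\subseteq K$ with $\mu(E)>\mu(K)-\delta$ satisfies $V_E^*\leq V_K+\eta=\eta$ on $K$ (using regularity $V_K=V_K^*$, and that $K$ regular with $V_K=0$ there). I would then run the Bernstein--Walsh machinery: given $u\in L(\C^n)$, for $t>0$ let $E_t:=\{z\in K: u(z)\leq t\}$; since $e^u\in L^p(\mu)$ for some scale and $u<+\infty$ $\mu$-a.e., one can choose $t$ (depending only on how much mass we are willing to lose, not on $u$) with $\mu(E_t)>\mu(K)-\delta$. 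Actually the cleaner route is: normalize $\|e^u\|_{L^p(\mu)}=1$; by Chebyshev, $\mu(\{e^{pu}>\lambda\})\leq 1/\lambda$, so the set $E$ where $e^{pu}\leq \mu(K)/\delta$ — i.e. $u\leq \frac1p\log(\mu(K)/\delta)$ — has $\mu(E)\geq\mu(K)-\delta$. On this $E$, the function $v:=u-\frac1p\log(\mu(K)/\delta)$ lies in $L(\C^n)$ and is $\leq 0$ on $E$, so $v\leq V_E^*\leq \eta$ everywhere, i.e. $u\leq \eta+\frac1p\log(\mu(K)/\delta)$ on all of $\C^n$, in particular $\sup_K e^{u}\leq e^{\eta}(\mu(K)/\delta)^{1/p}$. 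Hence $\sup_K e^{pu}\leq e^{p\eta}(\mu(K)/\delta)\leq C(1+\epsilon)^p\|e^{u}\|_{L^p(\mu)}^p$ with $C=\mu(K)/\delta$ once $\eta$ is chosen with $e^{\eta}\leq 1+\epsilon$. This is exactly \eqref{pshbm}, and note $C$ depends only on $\epsilon$ and $K$ (through $\mu$), uniformly in $p\geq 1$ and in $u$.

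\medskip

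\textbf{Main obstacle.} The heart of the argument, and the step I expect to be genuinely delicate, is the quantitative determining-to-uniform estimate ``$\mu(E)$ close to $\mu(K)$ forces $V_E^*$ uniformly small'': the hypothesis literally only says $V_E^*=V_K^*$ for full-measure carriers, and passing to a uniform modulus of continuity in the lost mass $\delta$ requires a compactness/monotonicity argument on the family $\{V_E^*\}$ together with the fact that $\mu$ charges no pluripolar set (to rule out mass concentrating on a pluripolar ``bad set'' where $V_E^*$ stays large). In $\C^n$ this is where one invokes the pluripotential-theoretic results of \cite{BLmass} with the relative capacity, exactly as in the proof of Theorem \ref{suffmd}; the measurability reductions (replacing a Borel $E$ by a nearby compact one without increasing $V_E^*$ too much, via inner regularity of $\mu$ and of the relevant capacity) are routine but must be done with care so that ``no mass on pluripolar sets'' is actually used. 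Once that uniform estimate is in hand, the Chebyshev/Bernstein--Walsh argument above is short and the passage to the $L^p$ formulation for all $p\geq 1$ is immediate since all constants were chosen independently of $p$.
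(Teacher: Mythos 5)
The paper states Proposition \ref{bbn} without proof, attributing it to \cite{BBN}, so your argument can only be measured against the standard one; and indeed your architecture is the standard one. Two remarks on the ``if'' direction: you have the automatic inequality backwards --- since $E\subseteq K$, it is $V_E\geq V_K$ (fewer constraints in the supremum) that is free, while $V_E^*\leq V_K^*$ is what needs proof; fortunately the argument you then give (any $u\in L(\C^n)$ with $u\leq 0$ on $E$ has $\|e^u\|_{L^p(\mu)}\leq 1$, so \eqref{pshbm} with $p\to\infty$ and then $\epsilon\to 0$ forces $u\leq 0$ on $K$, hence $u\leq V_K$) proves exactly the nontrivial inequality, so this is a labelling slip rather than a gap. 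Also, the reduction to carriers with $V_E^*$ psh is immediate: $\mu(E)=1>0$ and $\mu$ charges no pluripolar sets, so no carrier is pluripolar; no inner-regularity argument is needed.

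The ``only if'' direction is where the real content sits, and you have correctly isolated it: the Chebyshev normalization, the translate $v=u-\tfrac1p\log(1/\delta)$, and the bound $v\leq V_E\leq V_E^*\leq\eta$ on $K$ are all fine and yield \eqref{pshbm} with $C=1/\delta$ uniformly in $p$ and $u$. The one substantive issue is that the quantitative lemma (``$\mu(E)>\mu(K)-\delta$ forces $V_E^*\leq\eta$ on $K$'') is asserted, not proved, and your proposed route through \cite{BLmass} is not quite the right tool: those results convert closeness in \emph{capacity} into uniform closeness of extremal functions, whereas here the hypothesis is closeness in $\mu$\emph{-measure}, and for a general determining measure there is no a priori comparison between the two. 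The lemma is nonetheless true, by the direct monotone-limit argument you gesture at: if it failed for some $\eta$, pick Borel sets $E_j\subseteq K$ with $\mu(E_j)>1-2^{-j}$ and $\sup_K V_{E_j}^*>\eta$, set $F_N:=\bigcap_{j\geq N}E_j$, so $F_N\nearrow F$ with $\mu(F)=1$; each $F_N$ is non-pluripolar (this is precisely where ``no mass on pluripolar sets'' enters), $F$ is a carrier so $V_F^*=V_K^*=V_K=0$ on $K$ by the determining hypothesis and regularity, the theorem on extremal functions of increasing sequences of sets gives $V_{F_N}^*\searrow V_F^*$, Dini upgrades this to uniform convergence on $K$ since $V_K$ is continuous, and $V_{E_N}^*\leq V_{F_N}^*$ then contradicts the choice of $E_N$. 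So the gap is one of execution of a lemma you correctly identified, not of strategy; with that lemma supplied the proof is complete.
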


We remark that $\mu$ is determining for $K$ is equivalent to: {\it for each $u\in L(\C^n)$, $u\leq 0$ $\mu-$a.e. implies $u\leq 0$ on $K$} which is equivalent to: {\it for each $u\in L(\C^n)$, $\sup_K e^u =  ||e^u||_{L^{\infty}(\mu)}$}. Thus if $\mu$ is determining for $K$, (\ref{pshbm}) is equivalent to
$$||e^{u}||^p_{L^{\infty}(\mu)} \leq C(1+\epsilon)^p ||e^u||^p_{L^p(\mu)}. $$
Also, one does not need to require $u\in L(\C^n)$ as $u$ psh in a neighborhood of $K$ suffices; but to emphasize the connection with a (polynomial) Bernstein-Markov property, we use $u\in L(\C^n)$. Indeed, since $\frac{1}{deg (p)}\log|p|\in L(\C^n)$, an elementary argument shows this psh Bernstein-Markov property (\ref{pshbm}) implies a (polynomial) Bernstein-Markov property (compare (\ref{used})).

Berman, Boucksom and Nystrom raised the following question: 
\\[.5\baselineskip]
{\sl For a regular compact set $K\subset\C^n$ and $\mu$ a probability measure with support $K$ which puts no mass on pluripolar sets, is it true that if $(K,\mu)$ satisfies a Bernstein-Markov property {\it for polynomials} then $(K,\mu)$ satisfies a Bernstein-Markov property for {\it plurisubharmonic} functions (and hence these notions are equivalent)?} 
\\[.5\baselineskip]
Note the measure $\nu$ constructed on a general compact set $K$ in Proposition \ref{universal} is carried by a countable (and hence pluripolar) set. For a regular set $K$, by ``spreading out the mass'' from points to small nonpluripolar sets one is able to construct examples illustrating the answer to the question is negative.

\begin{example} We present an explicit example, due to V. Totik \cite{T}, of a positive measure $\mu$ with support equal to the interval $K=[0,1]$ such that

\begin{enumerate}
\item $\mu$ does not charge polar sets;
\item $(K,\mu)$ satisfy the {\it polynomial} Bernstein-Markov property; and  
\item $\mu$ is not a determining measure for $K$.
\end{enumerate}

\noindent Proposition \ref{bbn} then shows that the polynomial Bernstein-Markov property is not equivalent to the subharmonic Bernstein-Markov property.

The idea is to take a standard construction of a discrete measure on $K=[0,1]$ which satisfies the polynomial Bernstein-Markov property and then spread out the mass from points to sufficiently small intervals. The measure $\mu$ will be of the form $\mu=\sum_{n=1}^{\infty} \mu_n$ where for each $n=1,2,...$ we will choose $\epsilon_n >0$ in a specified way and, letting $dx$ be Lebesgue measure on $\R$,
$$\frac{d\mu_n}{dx} := {1\over n^5} \sum_{k=1}^{n^3-1}{1\over \epsilon_n} \cdot \chi_{[k/n^3 -\epsilon_n,k/n^3 +\epsilon_n]}$$
($\chi_A$ is the characteristic function of $A$). Then $\mu_n(K) \leq 2/n^2$ so $\mu$ is a finite measure and clearly the support of $\mu$ is $K=[0,1]$. The sequence $\{\epsilon_n\}$ is chosen so that the capacity 
$$C\bigl(\cup_{n=1}^{\infty} \cup_{k=1}^{n^3-1}[k/n^3 -\epsilon_n,k/n^3 +\epsilon_n]\bigr) <1/8$$
(e.g., apply Theorem 5.1.4 a) of \cite{R} with $d=1$). Clearly $\mu$ is not determining for $K$ since it has carriers $E$ with $C(E)<1/8$. Moreover, if $F$ is polar, $\mu_n(F)=0$ (since $\mu_n$ is absolutely continuous) and hence $\mu(F)=0$. 

It remains to show that $(K,\mu)$ satisfies the polynomial  Bernstein-Markov property. This is straightforward: if $p_n$ is a polynomial of degree $n$, let $||p_n||_K = M=|p_n(x_0)|$. By the classical Markov inequality, $||p_n'||_K\leq Mn^2$. Choose $k$ with $|k/n^3 - x_0|\leq 1/n^3$. Then for $x$ in the interval $[k/n^3 -\epsilon_n,k/n^3 +\epsilon_n]$, we have (if $n>4$)
$$|p_n(x)|\geq M - Mn^2\cdot 2/n^3 > M/2.$$
Hence
$$\int_K |p_n|^2 d\mu \geq  \int_K |p_n|^2 d\mu_n \geq (M/2)^2\cdot 1/n^5 \cdot (2\epsilon_n)\cdot 1/\epsilon_n$$
$$\geq (M/2)^2\cdot 1/n^5 = {1\over 4n^5} ||p_n||_K^2.$$\end{example}

In \cite{BBN}, given $K\subset \C^n$ compact and $Q$ continuous on $K$, one defines a measure $\mu$ with support in $K$ to be {\it determining for $(K,Q)$} if for all carriers $E$ of $\mu$ we have $V^*_{E,Q}=V^*_{K,Q}$. Here for such $E$ we define
$$V_{E,Q}(z):=\sup \{u(z): \ u\in L(\C^n), \ u\leq Q \ \hbox{on} \ E\}$$ as in (\ref{vkq}). Using this definition, a weighted version of Proposition \ref{bbn} if $V_{K,Q}$ is continuous is proved. In particular, in this case, $\mu_{K,Q}= (dd^cV_{K,Q})^n$ is determining for $(K,Q)$ and the triple  $(K,\mu_{K,Q},Q)$ satisfies a weighted plurisubharmonic Bernstein-Markov property and hence a weighted polynomial Bernstein-Markov property.
  
\section{Final remarks and open questions}

In $\C$, Proposition \ref{znunwtd} states that if $(K,\nu)$ satisfy a Bernstein-Markov inequality then
\begin{equation}\label{zeen} 
\lim_{k\to \infty} Z_k^{1/k^2} = \delta(K).
\end{equation}
A slightly weaker condition than the Bernstein-Markov property is sufficient for (\ref{zeen}). We say $\mu \in {\bf Reg}$ if the sequence of monic orthogonal polynomials $p_k(z)=z^k + \cdots$ for $\mu$ satisfy 
\begin{equation}\label{zeen2}\lim_{k\to \infty} ||p_k||_{L^2(\mu)}^{1/k} = \delta (K). \end{equation}
This is equivalent to: for any sequence $\{p_j\}$ of nonzero polynomials with $deg(p_j)\to +\infty$,
\begin{equation}\label{zeen3}
\limsup_{j\to \infty} \bigg(\frac{|p_j(z)|}{||p_j||_{L^2(\mu)}}\bigg)^{1/deg (p_j)} \leq \exp(V_K^*(z)), \ z\in \C. 
\end{equation}
For $n\geq 1$ if $K\subset \C^n$ is regular then (\ref{zeen3}) for $z\in \C^n$ is 
equivalent to the Bernstein-Markov property (\ref{bmk}). Bloom \cite{bloomreg} has recently shown that for $K\subset \C$,  (\ref{zeen2}) is equivalent to (\ref{zeen}). In $\C^n$ for $n>1$, one can take (\ref{zeen3}) as the definition of $\mu \in {\bf Reg}$ (cf., \cite{Biumj}).

Suppose that $K\subset \C^n$ is compact. Given a measure $\nu$ with support in $K$ and $\{q_j^{(k)}\}_{j=1,...,N_k}$ an orthonormal basis for $\mathcal P_k$ with respect to $L^2(\nu)$, 
$$
B_k^{\nu}(z):=\sum_{j=1}^{N_k} |q_j^{(k)}(z)|^2
$$
is the {\it $k-th$ Bergman function of $K,\nu$}. If $K=\{z\in \C:|z|\leq 1\}$ and $\nu=\frac{1}{2\pi}d\theta = \mu_K$, it was observed in the introduction that the monomials $1,z,...,z^k$ give an orthonormal basis for $\mathcal P_k$ in $L^2(\nu)$. Hence 
$$B_k^{\nu}(z)=\sum_{j=0}^k |z|^{2j}= \frac{|z|^{2k+2}-1}{|z|^2-1}$$
and it follows easily that
$$ \lim_{k\to \infty}\frac{1}{2k} \log B_k^{\nu}(z)=\log^+|z|=V_K(z)$$ 
locally uniformly in $\C$ (recall (\ref{veek})). More generally, if $V_K$ is continuous and $(K,\nu)$ satisfies a Bernstein-Markov property then
\begin{equation}
\label{unwtdweakasym}
\lim_{k\to \infty} \frac{1}{2k} \log B_k^{\nu}(z) = V_{K}(z)
\end{equation}
locally uniformly on $\C^n$ (cf., \cite{bloomshiff}). 

In the weighted setting, given a measure $\nu$ and a weight $Q$ on $K$, and letting $\{q_j^{(k)}\}_{j=1,...,N_k}$ be an orthonormal basis for $\mathcal P_k$ with respect to the weighted $L^2-$norm $p_k\to ||w^kp_k||_{L^2(\nu)}$, we call
$$B_k^{\nu,w}(z):=\sum_{j=1}^{N_k} |q_j^{(k)}(z)|^2\cdot w(z)^{2k}$$
the $k-th$ {\it Bergman function} of $K,Q,\nu$. If $V_{K,Q}$ is continuous and $(K,\nu,Q)$ satisfies a weighted Bernstein-Markov property we have 
\begin{equation}
\label{weakasym}
\lim_{k\to \infty} \frac{1}{2k} \log B_k^{\nu,w}(z) = V_{K,Q}(z)
\end{equation}
locally uniformly on $\C^n$ (cf., \cite{bloom}). The local uniform convergence in (\ref{weakasym}) implies weak-* convergence of the Monge-Amp\`ere measures
$$[dd^c\frac{1}{2k} \log B_k^{\nu,w}(z)]^n \to (dd^cV_{K,Q}^*)^n.$$
The asymptotic relations (\ref{unwtdweakasym}) and (\ref{weakasym}) form the basis for probabilistic results on zeroes of random polynomials and random polynomial mappings; cf., \cite{bayrak}, \cite{bloom}, \cite{bloom2}, \cite{bloomshiff}, and \cite{blrp}. 

The growth of the Bergman functions $\{B_k^{\nu}\}$ and $\{B_k^{\nu,w}\}$ detects (weighted) Bernstein-Markov properties of $\nu$. Indeed, given $p_k\in \mathcal P_k$, we may write $p_k(z)=\sum_{j=0}^k c_j q_j^{(k)}(z)$ where $c_j=\int_K p_k \overline{q_j^{(k)}}w^{2k}d\nu$ and we have 
$$ |p_k(z)|\leq \sqrt {B_k^{\nu,w}(z)}\cdot ||p_k||_{L^2(\nu)}.$$
Thus we see the following:
\begin{enumerate}
\item If $\bigl(\sup_K B_k^{\nu,w}(z)\bigr)^{1/2k}\to 1$, then $(K,\nu,Q)$ satisfy a weighted Bernstein-Markov property; hence
\item if $V_{K,Q}$ is continuous and (\ref{weakasym}) holds uniformly on $K$, then $(K,\nu,Q)$ satisfy a weighted Bernstein-Markov property
\end{enumerate}
\noindent with similar remarks holding in the unweighted case; e.g., if $V_K$ is continuous and (\ref{unwtdweakasym}) holds uniformly on $K$, then $(K,\nu)$ satisfy a Bernstein-Markov property. 

If $V_{K,Q}$ is continuous and $(K,\nu,Q)$ satisfy a weighted Bernstein-Markov property, it is proved in \cite{BBnew} that the sequence $\{\frac{1}{N_k}B_k^{\nu,w} d\nu\}$ of probability measures on $K$ satisfies
\begin{equation}
\label{strongasym}\frac{1}{N_k}B_k^{\nu,w} d\nu \to \mu_{K,Q}:=(dd^cV_{K,Q}^*)^n \ \hbox{weak-}*.\end{equation}
For $n=1$ this result is in \cite{blchrist}. The unweighted version of (\ref{strongasym}) is
\begin{equation}
\label{strongasymun}\frac{1}{N_k}B_k^{\nu} d\nu \to \mu_{K}:=(dd^cV_{K}^*)^n \ \hbox{weak-}*.\end{equation}

\medskip

We end with some open questions.

\begin{enumerate}

\item {\sl Does there exist a measure $\mu$ with support $K$ such that $K$ is regular and $(K,\mu)$ satisfy a Bernstein-Markov property but $\mu$ does not satisfy (\ref{md??}) for any $T>0$?}

\item {\sl (Erd\"os; cf., section 6.7 of \cite{ST}) For $\mu$ with support $K = [-1,1]$ and $d\mu(x) = w(x)dx$ with $w:[-1,1]\to [0,\infty)$ bounded, is it true that $\mu \in {\bf Reg}$ implies
$$\lim_{\epsilon \to 0} C(E_{\epsilon})=C(K)$$
where $E_{\epsilon}$ is any set obtained from $\{x\in K: w(x) >0\}$ by removing a
subset of measure at most $\epsilon$? The converse statement (due to Ullman and Erd\"os-Freud) follows from Theorem 6.7.2 of \cite{ST}.}

\item {\sl Is there an analogue of the italicized statement in the proof of Theorem \ref{suffmd} for $W_K$ in (\ref{wk}); i.e., given $\epsilon >0$, does there exist $\delta >0$ so that for any compact subset $E$ of $K$ with $C(E)>C(K) -\delta$ we have $W_E^*\leq W_K^* +\epsilon$ (perhaps with a different notion of capacity)?} %If so, Theorem \ref{BM} may be strengthened by replacing the pointwise mass density condition on $K$ to the capacitary mass density condition (\ref{md??}).}

\item {\sl In $\C^n$ for $n>1$, if $K$ is regular, is (\ref{bmk}) equivalent to (\ref{zeen})?}

\item {\sl Is (\ref{zeen3}) equivalent to (\ref{zeen}) in $\C^n$ for $n>1$?}

\item {\sl Does (\ref{strongasymun}) imply (\ref{unwtdweakasym})?}

\end{enumerate}

{\bf Authors:}\\[\baselineskip]
T. Bloom, bloom@math.toronto.edu\\
University of Toronto, Toronto, CA\\
\\[\baselineskip]
N. Levenberg, nlevenbe@indiana.edu\\
Indiana University, Bloomington, IN, USA\\
\\[\baselineskip]
F. Piazzon, fpiazzon@math.unipd.it\\
University of Padua, Padua, ITALY\\
\\[\baselineskip]
F. Wielonsky, franck.wielonsky@univ-amu.fr\\
Universit\'e Aix-Marseille, Marseille, FRANCE\\
\\[\baselineskip]

\end{document}